\documentclass[12pt,a4paper,reqno]{amsart}
\usepackage[utf8]{inputenc}
\usepackage[T1]{fontenc}
\usepackage[foot]{amsaddr}
\usepackage{amsmath}
\usepackage{amsfonts}
\usepackage{amssymb}
\usepackage{algorithm}
\usepackage{algpseudocode}
\usepackage{xcolor}
\usepackage{soul}
\usepackage{graphicx}
\usepackage{wrapfig}
\usepackage{subcaption}
\usepackage[nopar]{lipsum}
\usepackage{amsthm}
\usepackage{amssymb}
\usepackage{euscript}
\usepackage{blkarray}
\usepackage{makecell, multirow, tabularx,booktabs}
\usepackage{enumitem}
\usepackage{mathtools}
\usepackage{soul}
\usepackage{amscd}
\usepackage{float}
\usepackage[pagebackref,colorlinks=true,
linkcolor=blue,
urlcolor=red,colorlinks,
citecolor=green]{hyperref}
\usepackage{cleveref}
\usepackage{geometry}
\geometry{a4paper,total={170mm,257mm},left=20mm,	top=20mm,}
\usepackage{pgf,tikz,pgfplots}
\tikzstyle{none}=[]
\tikzstyle{new style 0}=[draw,circle,fill=white]
\tikzstyle{new edge style 1}=[draw,dashed]
\tikzstyle{new edge style 1}=[draw,dashed]
\usepackage{adjustbox}
\usepackage{setspace}
\usepackage[numbers,sort&compress]{natbib}

\allowdisplaybreaks[4]
\usetikzlibrary{calc}
\usetikzlibrary{patterns}
\pgfplotsset{compat=1.14}
\numberwithin{equation}{section}
\newtheorem{thm}{Theorem}[section]
\newtheorem{prop}[thm]{Proposition}
\newtheorem{lem}[thm]{Lemma}
\newtheorem{cor}[thm]{Corollary}
\newtheorem{exm}[thm]{Example}
\newtheorem{df}[thm]{Definition}
\newtheorem{rem}[thm]{Remark}

\makeatletter
\@namedef{subjclassname@1991}{Subject}
\@namedef{subjclassname@2000}{Subject}
\@namedef{subjclassname@2010}{Subject}
\@namedef{subjclassname@2020}{Subject}
\makeatother
\begin{document}
	\title{Spectral decomposition of hypergraph automorphism compatible matrices}
	\author[Banerjee]{Anirban Banerjee} 
	\email[Banerjee ]{\textit {{\scriptsize anirban.banerjee@iiserkol.ac.in}}}
	\author[ Parui]{Samiron Parui} 
	\email[ Parui ]{\textit {{\scriptsize  samironparui@gmail.com}}}
	\address[Banerjee]{Department of Mathematics and Statistics, Indian Institute of Science Education and Research Kolkata, Mohanpur-741246, India}
	
	\address[Parui]{School of Mathematical Sciences,  National Institute of Science Education and Research Bhubaneswar,  Bhubaneswar, Padanpur, Odisha 752050, India}

	
	\date{\today}
	\keywords{ Hypergraphs, Hypergraph matrices, Eigenvalues, Eigenvector, Hypergraph automorphism, Symmetry in hypergraph, Spectral decomposition, Equitable partition}
	
	\subjclass[2020]{
		05C65 
		; Secondary 
		05C50, 
		 	05C07,
		 	05C30
	}

	\begin{abstract}
	This study explores the relationship between hypergraph automorphisms and the spectral properties of matrices associated with hypergraphs. 
For an automorphism $f$, an \( f \)-compatible matrices capture aspects of the symmetry, represented by \( f \), within the hypergraph. 
 First, we explore rotation, a specific kind of automorphism and find that the spectrum of any matrix compatible with a rotation can be decomposed into the spectra of smaller matrices associated with that rotation. 
We show that the spectrum of any \(f\)-compatible matrix can be decomposed into the spectra of smaller matrices associated with the component rotations comprising \( f \). 
Further, we study a hypergraph symmetry termed unit-automorphism, which induces bijections on the hyperedges, though not necessarily on the vertex set. We show that unit automorphisms also lead to the spectral decomposition of compatible matrices.
	\end{abstract}
 \maketitle
\section{Introduction}
A \emph{hypergraph} $H$ is an ordered pair $(V(H), E(H))$, where the vertex set, $V(H)$ is a non-empty set, and the hyperedge set $E(H)$ is a subset of $\mathcal{P}(V(H))\setminus \emptyset$,   where $\mathcal{P}(V(H))$ is the power set of $H$. Each element $v\in V(H)$ is called a vertex within $H$, and any $e\in E(H)$ is called a hyperedge within $H$. In this context, we focus on \emph{finite hypergraphs}, signifying hypergraphs with a finite vertex set. Given a vertex $v\in V(H)$, the star of the vertex $v$ is $E_v(H)=\{e\in E(H):v\in e\}$. 

A graph $G$ is a hypergraph such that $|e|=2$ for all $e\in E(H)$. 
The connection between the structural symmetry found in graphs and matrices associated with graphs is well studied. These symmetrical patterns are described through concepts such as graph homomorphism, graph covering, quotient graphs, twin vertices, graph automorphisms, and quotient matrices. Various studies, including  \cite{Steve-Butler-2010-edgecover,Trevisan_symm_simax_2016,Oscar-rojo-balanced-tree-2005,mehatari-banerjee-motif2015,Vladimir-R-Covering-automorphisms,Benjamin-webb-gen-equitable-2019}, delve into how these symmetrical features manifest in the spectra of matrices associated with graphs. Graph automorphisms are adjacency relation-preserving bijections on the vertex set, which serve as fundamental tools for representing and understanding symmetry within graphs. 
Interesting results regarding the relationship between the multiplicities of the graph-adjacency eigenvalue and the graph automorphism can be found in the renowned book of \cite[Section 3.8]{rowlishionn-simic-sgt}. \emph{Equitable partition} is another technique employed to study graph symmetry and adjacency spectra \cite{rowlishionn-simic-sgt,brouwer2011spectra,godsil-compactgraph-equit-part}. Using equitable partition, certain eigenvalues of the adjacency matrix can be readily computed for graphs with significant structural symmetry. Later, using graph automorphisms, eigenvalues of a larger class of matrices, called automorphism-compatible matrices, are proved similar to a direct sum of smaller matrices associated with graph automorphisms in \cite{equit_benjamin1,benjamin2,Benjamin-webb-gen-equitable-2019}. 

Recently, matrices have been used to study hypergraphs \cite{Banerjee-2021-hgmat,Sarkar-banerjee-2020,up2021,rodriguez2003laplacian,Swarup-panda-2022-hypergraph,cardoso-trevisan-2022-signless}. Our previous work shows some connections between hypergraph matrices and hypergraph symmetries \cite{unit,invariant_2024}. 
In this study, we explore how hypergraph automorphisms are related to the spectra of hypergraph matrices. 
We find that for any hypergraph automorphism $f$, the symmetry associated with $f$ can be traced by 
a specific class of hypergraph matrices referred to as $f$-compatible matrices.
We start with specific automorphisms named \emph{rotation} and show that the spectrum of any rotation-compatible matrix
can be decomposed into the spectra of smaller matrices associated with the rotation. 
We also show that any hypergraph automorphism can be represented as a product of rotations, and given any hypergraph automorphism $f$, the spectrum of any $f$-compatible matrix can be decomposed into the spectra of a family of smaller matrices related to the factor rotations of $f$.
We provide an upper bound for the number of simple eigenvalues of a rotation-compatible matrix. A \emph{unit} in a hypergraph is a maximal collection of vertices that have the same stars. Here, we introduce a graph symmetry, named \emph{unit-automorphism}, which may not be a bijection on the vertex set but always induces a bijection of the hyperedge set. We show that unit automorphisms also lead us to the spectral decomposition of a class of matrices compatible with unit automorphism.

\section{Hypergraph automorphism}
Hypergraph automorphisms are hyperedge-preserving bijections on the vertex set of a hypergraph.
\begin{df}[Hypergraph automorphism] A \emph{hypergraph automorphism} is a bijection $f:V(H)\to V(H)$ such that $e\in E(H)$ if and only if the image of $e$, $\hat{f}(e)=\{f(v):v\in e\}\in E(H)$. 
\end{df}
Therefore, any automorphism $f:V(H)\to V(H)$ of a hypergraph $H$ induces a bijection  $\hat{f}:E(H)\to E(H)$ defined by $\hat f(e)=\{f(v):v\in e\}$ for all $e\in E(H)$. Given a hypergraph $H$, we denote the collection of all the automorphisms of $H$ as $Aut(H)$. The collection $Aut(H)$ forms a group under the composition of automorphisms. Given any $f\in Aut(H)$, the matrix $P_{f}=\left(p_{uv}\right)$ is a square matrix whose row and columns are indexed by $V(H)$, and 
\[p_{uv}=\begin{cases}
    1&\text{~if~}u\overset{f}{\mapsto}v,\\
    0&\text{~otherwise}.
\end{cases}\]
The matrix $P_f$ is the matrix representation of $f$ since for each $x:V(H)\to\mathbb{C}$, we have $(P_fx)(u)=x(f(u))=(x\circ f)(u)$.
For any automorphism $f$ of $H$, a matrix $M_H =\left(m_{uv}\right)$, indexed by $V(H)$, is referred to as \emph{$f$-compatible} if 
$ m_{uv}=m_{f(u)f(v)}$ for all $u,v\in V(H)$. In the forthcoming example, we demonstrate the prevalence of automorphism-compatible matrices within the extant literature.
\begin{exm}\label{ex-hyp-mat}\rm Adjacency matrices associated with hypergraphs are compatible with hypergraph automorphism.

1. The hypergraph adjacency matrix described in \cite{bretto2013hypergraph} is a matrix $A^{(r)}_H=\left(d_{uv}\right)_{u,v\in V(H)}$ such that $d_{uv}=|E_u(H)\cap E_v(H)|$, the number of hyperedges containing both $u$, and $v$, for all $u,v(\ne u)\in V(H)$, and the diagonal entry $d_{uu}=0$ for all $u\in V(H)$. Given any automorphism $f$ of $V(H)$, since $d_{uv}=d_{f(u)f(v)}$, the matrix $A^{(r)}_H$ is an $f$-compatible matrix.

2. Another adjacency $A^{(b)}_H=\left(a_{uv}\right)_{u,v\in V(H)}$ is considered in \cite{Banerjee-2021-hgmat}. For all $u,v(\ne u)\in V(H)$, the $(u,v)$-th entry of $A^{(b)}$  is $a_{uv}=\sum\limits_{e\in E_u(H)\cap E_v(H)}\frac{1}{|e|-1}$, and the diagonal entry $a_{uu}=0$ for all $u\in V(H)$. Any automorphism $f$ of $H$ induces an edge cardinality-preserving bijection $\hat f_{uv}:E_{u}(H)\cap E_v(H)\to E_{f(u)}(H)\cap E_{f(v)}(H)$. Consequently, $ a_{uv}=a_{f(u)f(v)}$ for all $u,v(\ne u)\in V(H)$, and $A^{(b)}_H$ is an $f$-compatible matrix. 

3. The probability transition matrix $P_H=\left(p_{uv}\right)_{u,v\in V(H)}$ considered in \cite{Banerjee-2021-hgmat} is defined as $p_{uv}=\frac{1}{|E_u(H)|}\sum\limits_{e\in E_u(H)\cap E_v(H)}\frac{1}{|e|-1}$ for two distinct $u,v\in V(H)$, and the diagonal entry $p_{uu}=0$ for all $u\in V(H)$. Since $|E_u(H)|=|E_{f(u)}(H)|$ for any hypergraph automorphism $f$, the probability transition matrix $P_H$ is also $f$-compatible for any $f\in Aut(H)$.

4. Similarly, the hypergraph Laplacian matrices described in \cite{Banerjee-2021-hgmat, rodriguez2003laplacian,rodriguez2009laplacian} are  compatible with any hypergraph automorphism. The Laplacian $L^{(r)}_H=\left(l^{(r)}_{uv}\right)_{u,v\in V(H)}$ considered in \cite{rodriguez2003laplacian} is such that $l^{(r)}_{uv}=-|E_u(H)\cap E_v(H)|$ for all distinct $u,v\in V(H)$, and for $u=v$, the diagonal entry is $ \sum\limits_{v\ne(u)\in V(H)}|E_u(H)\cap E_v(H)|$. For any $f\in Aut(H)$, since $|E_u(H)\cap E_v(H)|=|E_{f(u)}(H)\cap E_{f(v)}(H)|$, the matrix $L^{(r)}_H$ is $f$-compatible. For a similar reason, the hypergraph Laplacian matrix $L^{(b)}_H=\left(l^{(b)}_{uv}\right)_{u,v\in V(H)}$ defined in \cite{Banerjee-2021-hgmat} is also $f$-compatible for all $f\in Aut(H)$. Here $l^{(b)}_{uv}=-\sum\limits_{e\in E_u(H)\cap E_v(H)}\frac{1}{|e|-1}$ for two vertices $u\ne v$, and for $u\in V(H)$, the diagonal entry is $l^{(b)}_{uu}=|E_u(H)|$.

5. The signless Laplacian matrix $Q=\left(q_{uv}\right)_{u,v\in V(H)}$ described in \cite{skpanda2023signless} is such that
$q_{uv}=\sum\limits_{e\in E_u(H)\cap E_v(H)}\frac{1}{|e|-1}$ if $u\ne v$, and for $u=v$, the diagonal entry is $q_{uu}=|E_u(H)|$. Given any $f\in Aut(H)$, since $E_{f(v)}(H)=\{\hat{f}(e):e\in E_v(H)\}$ we have $q_{uu}=q_{f(u)f(u)}$ for all $u\in V(H)$ and for two distinct vertices $u, v$, the collection $E_{f(u)}(H)\cap E_{f(v)}(H)=\{\hat{f}(e):e\in E_u(H)\cap E_v(H)\}$ with $|e|=|\hat{f}(e)|$. Thus, $q_{uv}=q_{f(u)f(v)}$, and $Q$ is $f$-compatible.

6. Let $H$ be a hypergraph and $\delta_{V(H)}:V(H)\to (0,\infty)$, and $\delta_{E(H)}:V(H)\to (0,\infty)$ be two positive valued functions. The matrix representations of the general adjacency $A_{(H,\delta_{V(H)},\delta_{E(H)})}=\left(\mathfrak{a}_{uv}\right)_{u,v\in V(H)}$, general Laplacian $L_{(H,\delta_{V(H)},\delta_{E(H)})}=\left(\mathfrak{l}_{uv}\right)_{u,v\in V(H)}$, and general signless Laplacian ${\mathcal Q}_{(H,\delta_{V(H)},\delta_{E(H)})}=\left(\mathfrak{q}_{uv}\right)_{u,v\in V(H)}$ considered in \cite{up2021} are defined as
$$ \mathfrak{a}_{uv}=\begin{cases}
    \frac{1}{\delta_{V(H)}(u)}\sum\limits_{e\in E_u(H)\cap E_v(H)}\frac{\delta_{E(H)}(e)}{|e|^2} &\text{~if~} u\ne v\\
    0&\text{~otherwise,~}
\end{cases} $$
$$\mathfrak{l}_{uv}=\begin{cases}
    -\frac{1}{\delta_{V(H)}(u)}\sum\limits_{e\in E_u(H)\cap E_v(H)}\frac{\delta_{E(H)}(e)}{|e|^2} &\text{~if~} u\ne v\\
    \frac{1}{\delta_{V(H)}(u)}\sum\limits_{e\in E_u(H)}\frac{\delta_{E(H)}(e)}{|e|}&\text{~otherwise,~}
\end{cases}$$
and $ \mathfrak{q}_{uv}=\frac{1}{\delta_{V(H)}(u)}\sum\limits_{e\in E_u(H)\cap E_v(H)}\frac{\delta_{E(H)}(e)}{|e|^2}$ for all $u,v\in V(H)$. Different variations of $\delta_{V(H)}$, and $\delta_{E(H)}$ lead to an uncountable number of Adjacency, Laplacian and signless Laplacian of the hypergraph $H$. Given any $f\in Aut(H)$, if $\delta_{V(H)}(v)=\delta_{V(H)}(f(v))$, and $\delta_{E(H)}(e)=\delta_{E(H)}(\hat{f}(e))$ for all $v\in V(H)$, and $e\in E(H)$ then, $A_{(H,\delta_{V(H)},\delta_{E(H)})}$, $L_{(H,\delta_{V(H)},\delta_{E(H)})}$, and ${\mathcal Q}_{(H,\delta_{V(H)},\delta_{E(H)})}$ are $f$-compatible.

\end{exm}
In the case of a graph, the matrix representation of a graph automorphism commutes with the graph adjacency matrix (see \cite[Proposition 3.8.1]{rowlishionn-simic-sgt}). In the subsequent result, we observe a similar fact about a hypergraph automorphism and any matrix that is compatible with the hypergraph automorphism.
\begin{prop}
    Let $H$ be a hypergraph and $f$ be an automorphism of $H$. Given any $f$-compatible matrix $M_H$, it follows that $M_HP_f=P_fM_H$.
\end{prop}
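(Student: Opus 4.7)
The plan is to verify the identity entrywise. Both $M_H P_f$ and $P_f M_H$ are indexed by $V(H)$, so it suffices to show that for every pair $u,v\in V(H)$ the $(u,v)$-entries agree. First I would unpack the definition of $P_f$: since $(P_f)_{xy}=1$ iff $f(x)=y$, each row and column of $P_f$ has exactly one nonzero entry, which collapses the defining sums for matrix products to a single term. Concretely, I expect
\[
(M_H P_f)_{uv} \;=\; \sum_{w\in V(H)} m_{uw}\,(P_f)_{wv} \;=\; m_{u,f^{-1}(v)},
\]
because the only $w$ contributing is $w=f^{-1}(v)$. Similarly,
\[
(P_f M_H)_{uv} \;=\; \sum_{w\in V(H)} (P_f)_{uw}\, m_{wv} \;=\; m_{f(u),v},
\]
with the only contributing $w$ being $w=f(u)$.

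Next I would invoke the $f$-compatibility hypothesis $m_{xy}=m_{f(x)f(y)}$, applied to the pair $x=u$, $y=f^{-1}(v)$, to get
\[
m_{u,f^{-1}(v)} \;=\; m_{f(u),\,f(f^{-1}(v))} \;=\; m_{f(u),v}.
\]
Combining these three displays gives $(M_H P_f)_{uv}=(P_f M_H)_{uv}$ for all $u,v$, which is the desired equality.

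There is no real obstacle here: the argument is a routine bookkeeping check once one is careful about the convention $p_{uv}=1\iff f(u)=v$ (as opposed to the transposed convention), which determines whether $f$ or $f^{-1}$ appears when $P_f$ multiplies on the right versus the left. The only thing worth emphasizing in the write-up is that both the right and left multiplications by $P_f$ act as a relabeling of exactly one index (the column index for right multiplication, the row index for left multiplication), and the compatibility relation is precisely what reconciles the two relabelings. Consequently, the proof should be only a few lines long and requires no hypothesis beyond the defining symmetry of $M_H$ and the bijectivity of $f$.
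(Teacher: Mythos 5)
Your proof is correct and follows essentially the same route as the paper's: both compute the $(u,v)$-entries $(M_HP_f)_{uv}=m_{u,f^{-1}(v)}$ and $(P_fM_H)_{uv}=m_{f(u),v}$ and reconcile them via the compatibility relation $m_{xy}=m_{f(x)f(y)}$. Your write-up is in fact slightly more explicit about which pair the compatibility hypothesis is applied to, but there is no substantive difference.
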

\begin{proof}
    For $u,v\in V(H)$, consider the $(u,v)$-th entry
    $(M_HP_f)_{uv}=m_{uf^{-1}(v)}$. and $(P_fM_H)_{uv}=m_{f(u)v}$. Since $M_H$ is $f$-compatible, $ (P_fM_H)_{uv}=m_{f(u)v}=m_{uf^{-1}(v)}=(M_HP_f)_{uv}$. Therefore, $M_HP_f=P_fM_H$.
\end{proof}
As a consequence, given $f\in Aut(H)$, if $\lambda$ is an eigenvalue of an $f$-compatible matrix $M_H$ with an eigenvector\footnote{Given a matrix $M$ with an eigenvalue $\lambda$, we denote the eigenspace of $\lambda$ as $V_{\lambda}(M)$.}  $x:V(H)\to\mathbb{C}\in V_\lambda(M_H)$ then $P_fx\in V_\lambda(M_H)$. Thus, if $\lambda$ is a simple eigenvalue of $M_H$ then $ V_\lambda(M_H)$ is a one-dimensional eigenspace and $P_fx=\alpha x$ for some $\alpha\in \mathbb{C}$. Thus,  $\alpha$ is an eigenvalue of $P_f$ and $x\in V_{\alpha}(P_f)$. Since $P_f$ is a permutation matrix, a natural number $m$ exists, such as $P_f^m=I$, the identity matrix. Therefore, the absolute value, $|\alpha|=1$.
\begin{prop}
    Let $H$ be a hypergraph on $n$ vertices and $f\in Aut(H)$. Given an $f$-compatible matrix $M_H$, if $\lambda$ is a simple eigenvalue of $M_H$ with eigenvector $x$, then there exists $\alpha\in \mathbb{C}$ with $|\alpha|=1$ such that $P_fx=\alpha x$ if $M_H$ is a real matrix with all its eigenvalues real, then $\alpha=\pm 1$.
\end{prop}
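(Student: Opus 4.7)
The plan is to extract the statement directly from the commutation relation established in the previous proposition, together with two small observations: one about the order of the permutation matrix $P_f$ and one about the reality of eigenvectors corresponding to simple real eigenvalues of real matrices.

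First, I would invoke the preceding proposition to get $M_H P_f = P_f M_H$. From $M_H x = \lambda x$ this yields $M_H(P_f x) = P_f M_H x = \lambda (P_f x)$, so $P_f x \in V_\lambda(M_H)$. Because $\lambda$ is a simple eigenvalue, $V_\lambda(M_H)$ is one-dimensional, so there exists $\alpha \in \mathbb{C}$ with $P_f x = \alpha x$. This is essentially already argued in the paragraph preceding the statement, but I would repeat it cleanly so the proof is self-contained.

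Next, I would show $|\alpha| = 1$. Since $P_f$ is a permutation matrix of size $n$, the group $Aut(H)$ being finite forces $P_f$ to have finite order: there exists $m \in \mathbb{N}$ (for instance, the order of $f$ in $Aut(H)$, or simply $m=n!$) with $P_f^m = I$. Iterating $P_f x = \alpha x$ gives $x = P_f^m x = \alpha^m x$, and since $x \neq 0$, we get $\alpha^m = 1$, hence $|\alpha| = 1$.

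Finally, for the real case, I would argue that we may choose $x$ to be real. Since $M_H$ is real and $\lambda$ is a real simple eigenvalue, decompose any complex eigenvector into its real and imaginary parts; each lies in $V_\lambda(M_H)$, and at least one is nonzero, so it spans the (one-dimensional) eigenspace. Hence we may assume $x \in \mathbb{R}^{V(H)}$. Since $P_f$ is a real matrix, $P_f x$ is real, and then $\alpha = (P_f x)(u)/x(u) \in \mathbb{R}$ for any $u$ with $x(u) \neq 0$. Combined with $|\alpha| = 1$ this forces $\alpha = \pm 1$.

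There is no real obstacle here — the argument is a direct consequence of the commutation identity, the finite order of $P_f$, and standard facts about real eigenvectors for real simple eigenvalues. The only point requiring a moment of care is the reality of $x$ in the second assertion, since the statement merely supposes $x$ is an eigenvector without specifying that it is real; this is handled by the real/imaginary part decomposition above.
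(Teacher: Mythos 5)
Your proposal is correct and follows essentially the same route as the paper: commute $P_f$ past $M_H$, use simplicity of $\lambda$ to get $P_fx=\alpha x$, use the finite order of the permutation matrix for $|\alpha|=1$, and pass to a real eigenvector in the real case. You merely spell out two details the paper leaves implicit (the explicit power $P_f^m=I$ and the real/imaginary part decomposition justifying that $x$ may be taken real), which is a harmless refinement rather than a different argument.
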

\begin{proof}
    Since $P_fM_H=M_HP_f$, and $M_Hx=\lambda x$, we have $M_H(P_fx)=\lambda (P_fx)$. Since $\lambda$ is a simple eigenvalue of $M_H$, we have $P_fx=\alpha x$ for some $\alpha\in\mathbb{C}$.  Since $P_f$ is a permutation matrix, $|\alpha|=1$. If $M_H$ is a real matrix with all its eigenvalues real, then all the eigenvectors can be assumed to have real entries. Thus, $x$ is a real vector, and $P_f$ is a real matrix, thus
    $\alpha$ is a real number, and thus, $\alpha=\pm 1$.
\end{proof}
Therefore, for some $f\in Aut(H)$, if the $f$-compatible matrix $M_H$  is a real matrix with all its eigenvalues real and the eigenvectors of  $M_H$ form an eigenbasis of  $\mathbb{R}^n$, then for any simple eigenvalue $\lambda$ with eigenvector $x$ we have $P_f^2x=x$.  Since the eigenvectors of  $M_H$ form an eigenbasis of  $\mathbb{R}^n$, if all the eigenvalues of  $M_H$ are simple, then $P_f^2=I$ and the order of the automorphism $f$ is at most $2$. Thus, we have the following result, similar to \cite[Theorem 3.8.4]{rowlishionn-simic-sgt}.  
\begin{prop}
    Let $H$ be a hypergraph on $n$ vertices and $f\in Aut(H)$. If there exists an $f$-compatible matrix  $M_H$ with real entries such that
    all its eigenvalues of $M_H$ are real, and
simple eigenvalue then 
  the order of $f$ is at most $2$.
\end{prop}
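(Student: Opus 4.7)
The plan is to combine the immediately preceding proposition with a standard diagonalizability argument, following the informal paragraph that directly precedes the statement. Since $M_H$ is a real $n\times n$ matrix whose $n$ eigenvalues (counted with multiplicity) are all real and all simple, they are pairwise distinct, which forces $M_H$ to be diagonalizable over $\mathbb{R}$. Consequently, one can choose real eigenvectors $x_1,\ldots,x_n$ of $M_H$ that form a basis of $\mathbb{R}^n$, each associated with a simple eigenvalue $\lambda_i$.

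Next, apply the preceding proposition to each $(x_i,\lambda_i)$: since $\lambda_i$ is simple and $M_H$ has real entries with real spectrum, there exists $\alpha_i\in\{+1,-1\}$ with $P_f x_i=\alpha_i x_i$. Squaring gives $P_f^2 x_i = \alpha_i^2 x_i = x_i$ for every $i$. Because $\{x_1,\ldots,x_n\}$ spans $\mathbb{R}^n$, the linear map $P_f^2$ agrees with the identity on a basis, hence $P_f^2 = I$.

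Finally, translate the matrix identity back to the automorphism. Because $P$ is a faithful representation of the automorphism group (one checks directly that $P_{g\circ h}=P_g P_h$ from the action $(P_gx)(u)=x(g(u))$ described earlier), the equality $P_f^2 = P_{f\circ f} = I = P_{\mathrm{id}}$ forces $f\circ f=\mathrm{id}_{V(H)}$, so the order of $f$ divides $2$, i.e., is at most $2$.

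The argument is essentially a clean synthesis rather than a step containing a genuine obstacle; the only point deserving care is the justification that an $n\times n$ real matrix with $n$ distinct real eigenvalues admits a basis of $\mathbb{R}^n$ consisting of real eigenvectors, which is needed in order to upgrade the eigenvector-wise identity $P_f^2 x_i = x_i$ to the global identity $P_f^2 = I$. Once this is in place, the conclusion about the order of $f$ is immediate from the faithfulness of the permutation representation.
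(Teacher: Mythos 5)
Your proposal is correct and follows essentially the same route as the paper: take an eigenbasis of real eigenvectors, apply the preceding proposition to get $P_f x_i=\pm x_i$, hence $P_f^2x_i=x_i$ on a basis, so $P_f^2=I$ and $f$ has order at most $2$. The only difference is that you explicitly justify the existence of a real eigenbasis (from the eigenvalues being distinct) and the faithfulness of $f\mapsto P_f$, details the paper's proof simply assumes.
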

\begin{proof}
    Suppose $H$ is a hypergraph on $n$ vertices and $\lambda_1,\ldots,\lambda_n$ are all the eigenvalues of $M_H$ and the corresponding eigenbasis of $\mathbb{R}^n$ is $\{x_1,\ldots,x_n\}$. Since $\lambda_i$ is a simple eigenvalue with eigenvector $x_i$, we have $P_f^2x_i=x_i$ for all $i=1,2,\ldots,n$, and consequently $P_f^2=I$ for all $f\in Aut(H)$. This completes the proof. 
\end{proof}
Therefore, given a hypergraph $H$ and $f\in Aut(H)$ with the order of $f$ is more than $2$. If $M_H$ is a matrix with real entries with all its eigenvalues real and the eigenvectors of  $M_H$ form an eigenbasis of  $\mathbb{R}^n$, and $M_H$ is compatible with $f$, then $M_H$ has at least one eigenvalue of multiplicity more than $1$.

The multiplicities of the eigenvalues of graph adjacency encode information about the automorphism group of the graph (c.f. \cite[Proposition 3.8.5]{rowlishionn-simic-sgt}). Now, we explore the similar relation between the automorphisms of hypergraphs and automorphism-compatible matrices associated with hypergraphs. 
Suppose that $M_H$ is an $f$-compatible matrix. Thus, $P_fM_H=M_HP_f$. If $M_H$ is diagonalizable, that is, there is a non-singular matrix $U$ such that $U^{-1}M_HU=D$, where $D$ is the diagonalization of $M_H$, then
$U^{-1}P_fUD=DU^{-1}P_fU$.  Now if  $n_1,\ldots,n_m$ are multiplicities of the eigenvalues of $M_H$, then 
$U^{-1}P_fU$ is a block diagonal matrix with the orders of the diagonal blocks are $n_1,\ldots,n_m$. Thus, we have the following result. 
\begin{prop}
    Let $H$ be a hypergraph, and $f\in Aut(H)$. If  $M_H$ is an $f$-compatible diagonalizable matrix and the multiplicities of the distinct eigenvalues of $M_H$ are $n_1,\ldots,n_m$, then $P_f$ is similar to a block-diagonal matrix with block sizes  $n_1,\ldots,n_m$. 
\end{prop}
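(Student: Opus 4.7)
The plan is to leverage the previous proposition, which gives $P_fM_H=M_HP_f$, together with the diagonalizability of $M_H$, to transport the commutation relation into a setting where the block structure becomes forced.

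First, I would fix a diagonalization $U^{-1}M_HU=D$ in which $U$ is chosen so that columns corresponding to the same eigenvalue are consecutive; explicitly, $D$ is the block-diagonal matrix $\mathrm{diag}(\lambda_1 I_{n_1},\ldots,\lambda_m I_{n_m})$ with the $\lambda_i$ pairwise distinct. Conjugating the identity $P_fM_H=M_HP_f$ by $U$ yields $(U^{-1}P_fU)D=D(U^{-1}P_fU)$. Writing $B=U^{-1}P_fU$ and partitioning $B=(B_{ij})$ into blocks of sizes $n_i\times n_j$ matching the block structure of $D$, the commutation relation translates block-wise into
\[
\lambda_j B_{ij}=\lambda_i B_{ij}\qquad\text{for all }i,j\in\{1,\ldots,m\}.
\]

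Because the $\lambda_i$ are distinct, this forces $B_{ij}=0$ whenever $i\ne j$. Hence $B$ is block-diagonal with diagonal blocks of sizes $n_1,\ldots,n_m$, which is precisely the claim that $P_f$ is similar (via $U$) to a block-diagonal matrix with block sizes $n_1,\ldots,n_m$.

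There is no real obstacle here beyond the standard linear-algebraic fact that a matrix commuting with a diagonalizable matrix preserves its eigenspaces; the only point that needs to be made carefully is the choice of the ordering of eigenvectors in $U$ so that the block sizes in the conclusion come out exactly as $n_1,\ldots,n_m$. Once that bookkeeping is in place, the argument is a one-line application of the commutation identity derived in the preceding proposition.
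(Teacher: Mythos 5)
Your proof is correct and follows essentially the same route as the paper: conjugate the commutation relation $P_fM_H=M_HP_f$ by the diagonalizing matrix $U$ and observe that $(\lambda_i-\lambda_j)B_{ij}=0$ forces the off-diagonal blocks to vanish. The paper phrases this entry-wise ($d_iq_{ij}=q_{ij}d_j$) rather than block-wise, and you are slightly more explicit about ordering the eigenvectors so that equal eigenvalues are consecutive, but the argument is the same.
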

\begin{proof}
   Let $U$ be  an invertible matrix, such that   $U^{-1}M_HU=D$, the diagonalization of $M_H$.  Since $M_H$ is $f$-compatible, $U^{-1}P_fUD=DU^{-1}P_fU$.
    Suppose that $Q=(q_{ij})=U^{-1}P_fU$
    and $D=diag(d_1,\dots,d_n)$, then  we have 
    $d_iq_{ij}=q_{ij}d_j$. Thus, if $\lambda_i$ is an eigenvalue of $M_H$ of multiplicity $n_i$, then $\lambda_i$ corresponds to a diagonal block of $Q$ of size $n_i$. So  
    $Q$ is a block diagonal matrix with diagonal blocks  $B_1,\ldots,B_m$ of size $n_1,\ldots,n_m$, respectively. This completes the proof.
\end{proof}
Given a hypergraph $H$, if  a matrix $M_H$ is compatible with all $f\in Aut(H)$ then $Aut(H)$ is isomorphic to a subgroup of $\mathcal{O}(n_1)\times\ldots\times\mathcal{O}(n_m)$ where $\mathcal{O}(k)$ is the multiplicative group of $k\times k$ orthogonal matrices, where $n_1,\ldots,n_m$ are the multiplicities of the distinct eigenvalues of $M_H$.
\subsection{Equitable partition}
The results we have presented so far show relations between hypergraph automorphisms and multiplicities of eigenvalues of automorphism-compatible matrices. Now, we use \emph{equitable partition} to explore further the relation between hypergraph symmetries and the eigenvalues of the matrices associated with hypergraphs.
Any automorphism $f$ of a hypergraph $H$ induces an equivalence relation $\mathfrak{R}_f$ on $V(H)$ such that $\mathfrak{R}_f=\{(u,v)\in V(H)\times V(H):v=f^i(u)\text{~for some integer~}$i$\}$. For any $v\in V(H)$, the $\mathfrak{R}_f$-equivalence class $O_f(v)$ is called the $f$-orbit of $v$. That is $O_f(v)=\{f^i(v):\text{for all integer }i\}$. Since equivalence classes form a partition, we have the partition of the vertex set $V(H)=\bigcup\limits_{v\in V(H)}O_f(v)$. We refer to this partition as the \emph{$f$-orbit partition of} $V(H)$.

For any square matrix \(M=(m_{uv})_{u,v\in V}\) with rows and columns indexed by the finite set \(V=\{1,2,3,\ldots, n\}\), an \emph{equitable partition} of the matrix \(M\) is defined as a disjoint partition \(V=V_1\cup V_2\cup\ldots\cup V_m\) of \(V\). This partition satisfies the condition that, for any \(i,j\in\{1,2,\ldots,m\}\),
\[\sum\limits_{w\in V_j}m_{uw}=\sum\limits_{w\in V_j}m_{vw}\]
for any pair of arbitrary indices \(u,v\in V_i\). 
In the case of graph automorphism, the orbits form an equitable partition of the vertex set (\cite[Proposition 3.2]{equit_benjamin1}). We show a similar fact for hypergraph automorphism in the following result.
 \begin{prop}\label{aut-equit}
		Let $H$ be a hypergraph. Suppose that $f$ is an automorphism in $H$. The $f$-orbit partition is an equitable partition for any $f$-compatible matrix $M_H$.
\end{prop}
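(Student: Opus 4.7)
The plan is to take two arbitrary vertices $u,v$ in the same $f$-orbit $O_i$ and show that for any other orbit $O_j$, the row sums $\sum_{w \in O_j} m_{uw}$ and $\sum_{w \in O_j} m_{vw}$ coincide. Since $u$ and $v$ lie in the same $\langle f\rangle$-orbit, there is some integer $k$ with $v = f^k(u)$, which reduces the problem to comparing $\sum_{w \in O_j} m_{uw}$ with $\sum_{w \in O_j} m_{f^k(u)\,w}$.

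Next, I would iterate the $f$-compatibility relation. The hypothesis $m_{xy} = m_{f(x)f(y)}$ applied $k$ times gives $m_{xy} = m_{f^k(x)f^k(y)}$ for all $x,y \in V(H)$ and all integers $k$ (negative values of $k$ use the fact that $f$ is a bijection, hence $P_f^{-1}$ exists and equals $P_{f^{-1}}$). Thus $m_{uw'} = m_{f^k(u)\,f^k(w')}$ for every $w' \in V(H)$, and in particular for every $w' \in O_j$.

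The crucial observation is that $f$ (and hence every power $f^k$) permutes each orbit $O_j$ among itself: by definition, $O_j = \{f^i(w_0) : i \in \mathbb{Z}\}$ for any $w_0 \in O_j$, and $f$ merely shifts the index $i \mapsto i+1$. So the map $w' \mapsto f^k(w')$ is a bijection of $O_j$ onto itself. Therefore, reindexing via $w = f^k(w')$,
\[
\sum_{w \in O_j} m_{vw} \;=\; \sum_{w \in O_j} m_{f^k(u)\,w} \;=\; \sum_{w' \in O_j} m_{f^k(u)\,f^k(w')} \;=\; \sum_{w' \in O_j} m_{uw'},
\]
which is exactly the equitable partition condition.

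There is no serious obstacle here; the proof is essentially a double use of the hypothesis, namely $f$-compatibility at the level of matrix entries together with $f$-invariance of each orbit as a set. The only subtlety worth spelling out is the index substitution step, where one must explicitly note that $f^k$ restricts to a bijection $O_j \to O_j$ so that the sum is unaffected by relabelling $w \leftrightarrow f^k(w')$.
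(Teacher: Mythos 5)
Your proof is correct and follows essentially the same route as the paper: iterate the compatibility relation to get $m_{xy}=m_{f^k(x)f^k(y)}$, observe that $f^k$ restricts to a bijection of each orbit onto itself, and reindex the sum. The paper's version is just a more compressed write-up of the same reindexing argument.
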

\begin{proof}
	Let $O_f(u)$, and $O_f(v)$ be two $f$-orbits. For $u_1,u_2\in O_f(u)$, without loss of generality, we can assume that there exists a non-negative integer $i$ such that $u_2=f^i(u_1)$. Since $M_H$ is $f$-compatible, and the restriction of $ f^i$ on $O_f(v)$ is a bijection on $O_f(v)$,
		$$ \sum\limits_{w\in O_f(v)}m_{u_1w}=\sum\limits_{w\in O_f(v)}m_{f^i(u_1)f^i(w)}=\sum\limits_{w^\prime\in O_f(v)}m_{u_2w^\prime}.$$
		This completes the proof.
	\end{proof}
 Thus if $\mathcal{O}_f$ is the collection of all the orbits then for all $i,j\in \mathcal{O}_f$, $\sum\limits_{w\in j}m_{uw}=b_{ij}$ is a constant for all $u\in V_i $. Therefore, we define the \emph{$f$-orbit quotient matrix} $[M_H/\mathcal{O}_f]=\left(b_{ij}\right)_{i,j\in \mathcal{O}_f}$ of $M_H$. Before going into the next result, for any $x:\mathcal{O}_f\to\mathbb{C}$, we define $x_{\mathcal{O}_f:V(H)}\to \mathbb C$ as
		$x_{\mathcal{O}_f}(v)=x(O_f(v))$.
      \begin{prop}
 \label{lem-orbit}
		Let $H$ be a hypergraph. Suppose that $f$
		is an automorphism of $H$. For any $f$-compatible matrix $M_H$, each eigenvalue of $[M_H/\mathcal{O}_f] $ is an eigenvalue of $M_H$.
	\end{prop}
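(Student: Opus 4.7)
The plan is to lift eigenvectors of the quotient matrix $[M_H/\mathcal{O}_f]$ to eigenvectors of $M_H$ by composing with the orbit-assignment map, exactly as in the graph case. The key ingredient is \Cref{aut-equit}, which guarantees that the orbit partition is equitable, so the row-sums $\sum_{w \in j} m_{uw}$ depend only on the orbit of $u$, not on $u$ itself. This is the property that makes the lift of an eigenvector well-behaved under $M_H$.

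Concretely, I would start with an eigenvalue $\lambda$ of $[M_H/\mathcal{O}_f]$ and a nonzero eigenvector $x:\mathcal{O}_f\to\mathbb{C}$, and form the lifted vector $x_{\mathcal{O}_f}:V(H)\to\mathbb{C}$ with $x_{\mathcal{O}_f}(v)=x(O_f(v))$ as defined in the preamble to the proposition. Next I would compute, for an arbitrary $u\in V(H)$,
\[
(M_H x_{\mathcal{O}_f})(u) \;=\; \sum_{w\in V(H)} m_{uw}\,x_{\mathcal{O}_f}(w) \;=\; \sum_{j\in\mathcal{O}_f} x(j) \sum_{w\in j} m_{uw},
\]
where the second equality simply regroups the sum according to the orbit partition and uses that $x_{\mathcal{O}_f}$ is constant on each orbit. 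Then I would invoke \Cref{aut-equit} to replace the inner sum by $b_{O_f(u),\,j}$, yielding
\[
(M_H x_{\mathcal{O}_f})(u) \;=\; \sum_{j\in\mathcal{O}_f} b_{O_f(u),\,j}\, x(j) \;=\; \bigl([M_H/\mathcal{O}_f]\,x\bigr)(O_f(u)) \;=\; \lambda\, x(O_f(u)) \;=\; \lambda\, x_{\mathcal{O}_f}(u).
\]

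To complete the argument, I need only observe that $x_{\mathcal{O}_f}$ is nonzero whenever $x$ is nonzero, since each orbit is nonempty and $x_{\mathcal{O}_f}$ takes the value $x(j)$ on every vertex of orbit $j$. Hence $x_{\mathcal{O}_f}$ is a genuine eigenvector of $M_H$ with eigenvalue $\lambda$.

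I do not anticipate a serious obstacle; the only mild care needed is the index bookkeeping when swapping the sum over $V(H)$ for the double sum over orbits, and an honest appeal to \Cref{aut-equit} (rather than to any property of $f$ itself). The proof would fail without equitability, so highlighting that exact step is the essence of the argument.
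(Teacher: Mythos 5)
Your proposal is correct and follows essentially the same route as the paper: lift $x$ to $x_{\mathcal{O}_f}$, regroup the sum over the orbit partition, and use the equitability from \Cref{aut-equit} to identify the inner sums with the entries $b_{ij}$ of the quotient matrix. The only addition is your explicit remark that $x_{\mathcal{O}_f}\neq 0$, which the paper leaves implicit.
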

 \begin{proof}
		Let $[M_H/\mathcal{O}_f]x=\lambda x$. 
   If $O_f(v)=i\in \mathcal{O}_f$ for some $v\in V(H)$ then
		\begin{align*}
			(M_Hx_{\mathcal{O}_f})(v)=\sum\limits_{w\in V(H)}m_{vw}x_{\mathcal{O}_f}(w)=\sum\limits_{j\in \mathcal{O}_f}b_{ij}x(j)=([M_H/\mathcal{O}_f]x)(i).
		\end{align*}
		Therefore, for all $v\in V(H)$,
		$$(M_Hx_{\mathcal{O}_f})(v)=([M_H/\mathcal{O}_f]x)(O_f(v))=\lambda (x)(O_f(v))=\lambda (x_{\mathcal{O}_f})(v).$$
		This completes the proof.
	\end{proof}
\subsection{Rotation}
Now, we delve deeper by employing a specific class of hypergraph automorphisms to study eigenvalues and eigenvectors of automorphism-compatible matrices.

\begin{df}[Rotation of order $n$]Let $H$ be a hypergraph and $n(\ge 2)\in\mathbb{N}$. A hypergraph automorphism $f$ is called a rotation of order $n$ in $H$, if $V(H)$ can be partitioned into disjoint sets such that $V(H)=U_0\cup U_1\cup\ldots U_{n-1}\cup X$ such that $f(v)=v$ for all $v\in X$ and $U_{i+1}=\{f(v):v\in U_i\}$ for all $i=0,\ldots,n-2$ and $U_{0}=\{f(v):v\in U_{n-1}\}$. We refer 
		$U_i$ is the \textit{$i$-th component of the rotation $f$} for $i=0,\ldots,n-1$, and $X$ as the \textit{invariant set under the rotation $f$}.
	\end{df}
Given any rotation $f$ of order $n$ with invariant set $X\subseteq V(H)$,  $\Gamma_f=\{f^i:i=0,1,2,\ldots,n-1\}$ is a collection of rotations and for each $i$, the invariant set of $f^i$ is $X$. For all $i,j=1,2,\ldots,n-1$, we have $f^i\circ f^j=f^{[i+j]_n}\in \Gamma_f$, where $[i+j]_n$ is the remainder when $i+j$ is divided by $n$. The inverse ${(f^i)}^{-1}=f^{n-i}\in \Gamma_f$ for all $i=1,2,\ldots,n-1$, and $f^0$ is the identity automorphism of $H$. Therefore, $\Gamma_f$ is a subgroup of the $Aut(H)$.

\begin{figure}
    \centering
   \begin{tikzpicture}[scale=0.5]
	
		\node [style=none] (0) at (0, 0) {};
		\node [style=none] (1) at (2.25, 0) {};
		\node [style=none,scale=0.6] (2) at (3.25, 0) {9};
		\node [style=none] (5) at (4.5, 0) {};
		\node [style=none] (6) at (-1.25, 0) {};
		\node [style=none] (7) at (-3.5, -3.25) {};
		\node [style=none] (8) at (0.5, 1.25) {};
		\node [style=none] (9) at (0.75, -0.75) {};
		\node [style=none] (10) at (-3.5, 3.5) {};
		\node [style=none] (11) at (-2, -1.5) {};
		\node [style=none] (12) at (-2.5, -2.25) {};
		\node [style=none] (13) at (-1.75, 2.25) {};
		\node [style=none] (14) at (-2.75, 2.25) {};
		\node [style=none] (15) at (-3.25, 3) {};
		\node [style=none] (16) at (4, -0.25) {};
		\node [style=none] (17) at (-1.25, 1.5) {};
		\node [style=none] (18) at (1.5, 0.25) {};
		\node [style=none] (19) at (-3.5, -2.25) {};
		\node [style=none] (20) at (3.75, 1) {};
		\node [style=none] (21) at (-1.5, -0.75) {};
		\node [style=none] (22) at (1.25, 0.5) {};
		\node [style=none] (23) at (-2.5, 3.5) {};
		\node [style=none] (24) at (-0.75, 2.75) {};
		\node [style=none] (25) at (-0.75, -2) {};
		\node [style=none] (26) at (-2.5, -3) {};
		\node [style=none] (27) at (1.75, 3.5) {};
		\node [style=none] (28) at (1.5, -3) {};
		\node [style=none] (29) at (-3.5, 0.25) {};
		\node [style=none,scale=0.6] (30) at (0, 0) {1};
		\node [style=none,scale=0.6] (31) at (-1.75, 2.25) {2};
		\node [style=none,scale=0.6] (32) at (-2.75, 2.25) {3};
		\node [style=none,scale=0.6] (33) at (1.75, 3.5) {10};
		\node [style=none,scale=0.6] (34) at (-3.5, 0.25) {4};
		\node [style=none,scale=0.6] (35) at (-2, -1.5) {5};
		\node [style=none,scale=0.6] (36) at (-2.5, -2.25) {6};
		\node [style=none,scale=0.6] (37) at (1.5, -3) {7};
		\node [style=none,scale=0.6] (38) at (2.25, 0) {8};
		\node [style=none] (39) at (0.5, 6.25) {};
		\node [style=none] (40) at (-7.25, -2) {};
		\node [style=none] (41) at (5.5, -4.5) {};
		\node [style=none] (42) at (0.5, 0.75) {};
		\node [style=none] (43) at (-0.75, 0.5) {};
		\node [style=none] (44) at (-0.25, -0.5) {};
		\node [style=none,scale=0.75] (45) at (5, 0) {e};
		\node [style=none,scale=0.75] (46) at (-4, 3) {f};
		\node [style=none,scale=0.75] (47) at (-4, -3) {g};
		\node [style=none,scale=0.75] (48) at (1.25, 5.25) {i};
		\node [style=none,scale=0.75] (49) at (-4.5, -0.25) {j};
		\node [style=none,scale=0.75] (50) at (1.75, -4.25) {h};
		\node [style=none,scale=0.85,scale=0.9] (51) at (-5, 4) {$U_0$};
		\node [style=none,scale=0.85,scale=0.9] (52) at (-2, -5) {$U_1$};
		\node [style=none,scale=0.85,scale=0.9] (53) at (5.25, 2.75) {$U_2$};
		\node [style=none,scale=0.85] (54) at (4.5, -5.5) {H};
		\node [style=none,,scale=0.75] (55) at (19, 0) { \centering$A^{(r)}_H=$
     \begin{tabular}{|c||c|ccc|ccc|ccc|}\hline
        & X &  &$U_0$   &  &  &$U_1$  &  &  &$U_2$ & \\\hline\hline
      X&0 & 1 & 1 & 0 & 1 & 1 & 0 & 1 & 1 & 0\\\hline
      &1 & 0 & 3 & 1 & 1 & 1 & 0 & 1 & 1 & 1\\
     $ U_0$&1 & 3 & 0 & 1 & 1 & 1 & 0 & 1 & 1 & 1\\ 
      &0 & 1 & 1 & 0 & 1 & 1 & 0 & 0 & 0 & 0\\\hline
      &1 & 1 & 1 & 1 & 0 & 3 & 1 & 1 & 1 & 0\\ 
     $ U_1$&1 & 1 & 1 & 1 & 3 & 0 & 1 & 1 & 1 & 0\\
      &0 & 0 & 0 & 0 & 1 & 1 & 0 & 1 & 1 & 0\\\hline
      &1 & 1 & 1 & 0 & 1 & 1 & 1 & 0 & 3 & 1\\
     $ U_2$&1 & 1 & 1 & 0 & 1 & 1 & 1 & 3 & 0 & 1\\
      &0 & 1 & 1 & 0 & 0 & 0 & 0 & 1 & 1 & 0 \\ \hline
     \end{tabular}};
	
		\draw (5.center)
			 to [bend left=90, looseness=0.75] (6.center)
			 to [bend left=90, looseness=0.75] cycle;
		\draw (7.center)
			 to [bend left=90, looseness=0.75] (8.center)
			 to [bend left=90, looseness=0.75] cycle;
		\draw [bend left=90, looseness=0.75] (9.center) to (10.center);
		\draw [bend left=90, looseness=0.75] (10.center) to (9.center);
		\draw (15.center)
			 to [in=225, out=-105, looseness=1.25] (17.center)
			 to [bend left=90, looseness=2.50] (18.center)
			 to [bend right=90] (16.center)
			 to [bend right=90, looseness=1.50] cycle;
		\draw (19.center)
			 to [bend left=75, looseness=1.25] (21.center)
			 to [bend right=90, looseness=2.50] (22.center)
			 to [bend left=90] (20.center)
			 to [bend left=90, looseness=1.50] cycle;
		\draw (25.center)
			 to [bend left=45, looseness=1.25] (24.center)
			 to [bend right=60] (23.center)
			 to [bend right=45, looseness=1.25] (26.center)
			 to [bend right=75, looseness=1.25] cycle;
		\draw [style=new edge style 1,pattern=dots,pattern color=gray!40!white] (40.center)
			 to [bend left, looseness=0.75] (39.center)
			 to (43.center)
			 to cycle;
		\draw [style=new edge style 1,pattern=dots,pattern color=gray!40!white] (41.center)
			 to (44.center)
			 to (40.center)
			 to [bend right=15, looseness=1.25] cycle;
		\draw [style=new edge style 1,pattern=dots,pattern color=gray!40!white] (41.center)
			 to [bend right, looseness=1.25] (39.center)
			 to (42.center)
			 to cycle;
	\draw[dashed] (0,0) circle [radius=20pt];
 \node[scale=0.75]at (0.35,0.25){X};
\end{tikzpicture}

    \caption{A rotation in a hypergraph $H$ and adjacency matrix $A^{(r)}_H$ is compatible with this rotation.}
    \label{fig:rot}
\end{figure}
\begin{exm}\label{ex-hyp-rot}\rm
    Consider the hypergraph $H$ illustrated in the \Cref{fig:rot}. The vertex set, $V(H)=\{1,2,\ldots,10\}$, and the hyperedge set, $E(H)=\{e,f,g,h,i,j\}$ where $e=\{1,8,9\}$, $f=\{1,2,3\}$, $g=\{1,5,6\}$, $h=\{5,6,7,8,9\}$, $i=\{8,9,10,3,2\}$, and $j=\{2,3,4,5,6\}$ are the hyperedges of $H$. Given the function $f:V(H)\to V(H)$ defined as\footnote{ For any function on a finite domain $\{1,\ldots,n\}$, we represent it as an array with two-row, $f=\left(\begin{smallmatrix}
       i:& 1&\ldots&n\\
       f(i):& f(1)&\ldots&f(n)
    \end{smallmatrix}\right)$, in which the first and second entries of a column are $i$, and $f(i)$ respectively, for all $i=1,2,\ldots,n$. } 
    $$f=\left(\begin{smallmatrix}
       i:& 1&2&3&4&5&6&7&8&9&10\\
       f(i):& 1&5&6&7&8&9&10&2&3&4
    \end{smallmatrix}\right),$$
    the vertex set can be partitioned as $V(H)=U_0\cup U_1\cup U_2\cup X$ such that $f(U_0)=U_1$, $f(U_1)=U_2$,  $f(U_2)=U_0$, and $f(X)=X$. 
    This bijection $f$ is a rotation in $H$ of order $3$ with the invariant set $X$.
    \end{exm}
 \subsection{Rotation and eigenvalues of rotation-compatible matrices}
  Let $a_0,a_1,\ldots,a_{m-1}\in \mathbb{C}$. A circulant matrix with $[\begin{smallmatrix}
  a_0&a_1&\ldots&a_{n-1}   
 \end{smallmatrix}]$ as its first row is an $n\times n$ matrix 
 $C=\left[\begin{smallmatrix}
  a_0&a_1&\ldots&a_{n-1}\\
  a_{n-1}&a_0&\ldots&a_{n-2}\\
  \vdots&\vdots&\ddots&\vdots\\
  a_1&a_2&\ldots&a_0
 \end{smallmatrix}\right]$. If we replace each $a_i$ by an $m\times m$ matrix $A_i$, then the obtained $nm\times nm$ matrix $B=\left[\begin{smallmatrix}
  A_0&A_1&\ldots&A_{n-1}\\
  A_{n-1}&A_0&\ldots&A_{n-2}\\
  \vdots&\vdots&\ddots&\vdots\\
  A_1&A_2&\ldots&A_0
 \end{smallmatrix}\right]$ is called a block-circulant matrix of order $nm$. We see in \Cref{fig:rot}, the matrix $A_H^{(r)}$ has a block-circulant submatrix corresponding to the vertices in $ U_0\cup U_1\cup U_2$.  
 Any matrix compatible with a rotation of order $n$ contains a block-circulant submatrix of order $mn$, where $m$ is the cardinality of the $0$-th component of the rotation. 
 If we consider the block circulant matrix $B$, then for any $n$-th root of unity $\omega$, 
each eigenvalue of the $m\times m$ matrix $B_\omega=\sum\limits_{i=0}^{n-1}\omega^iA_i$ is an eigenvalue of $B$ (c.f. \cite[Section-4]{kaveh2011block}). Now, we introduce the following notion.
 \begin{df}[Rotation matrix]
		Let $H $ be a hypergraph. Suppose that $f$ is a rotation of order $n$ in $H$ with the $0$-th component of $f$ is $U_0$, and $M_H=\left(m_{uv}\right)_{u,v\in V(H)}$ is a $f$-compatible matrix associated with $H$. Let $\omega$ be an $n$-th root of unity. 
  The \emph{$w$-rotation matrix} of $M_H$ associated with $f$ is a matrix $M^{\omega}_H(f,U_0)=\left(r_{uv}\right)_{u,v\in U_0}$, indexed by $U_0$, is defined as 
		$$r_{uv}=\sum\limits_{i=0}^{n-1}\omega^im_{uf^i(v)}.
		$$ 
	\end{df}
 \begin{exm}\label{rot-mat}\rm
     In the \Cref{ex-hyp-mat}, we see the matrix $A^{(r)}_H$ is compatible with hypergraph automorphism. The \Cref{fig:rot} illustrates the hypergraph $H$ considered in \cref{ex-hyp-rot}, and the matrix $A^{(r)}_H$. The rotation $f$ of order $3$ given in \Cref{ex-hyp-rot} is such that $A_H^{(r)}$ is compatible with $f$. Thus, for any root $\omega$ of the equation $x^3=1$, the rotation matrix ${A^{(r)}}^\omega_H(f,U_0)$ of $A^{(r)}_H$ is
     $${A^{(r)}}^\omega_H(f,U_0)=\left[\begin{smallmatrix}
         0 & 3 & 1\\
         3 & 0 & 1\\
         1 & 1 & 0
     \end{smallmatrix}\right]+\omega\left[\begin{smallmatrix}
         1 & 1 & 0\\
         1 & 1 & 0\\
         1 & 1 & 0
     \end{smallmatrix}\right]+\omega^2\left[\begin{smallmatrix}
         1 & 1 & 1\\
         1 & 1 & 1\\
         0 & 0 & 0
     \end{smallmatrix}\right].$$  
 \end{exm}
In the existing literature, it is well-documented that the eigenvalues of a block-circulant matrix can be derived through the computation involving smaller matrices (see, for example, \cite[Section-4]{kaveh2011block}, \cite[Lemma 2.4]{equit_benjamin1}). The subsequent theorem presented herein uses analogous methodologies to demonstrate that, in the case of a rotation-compatible matrix $M_H$, its eigenvalues can be effectively determined using the rotation matrices associated with $M_H$.
\begin{thm}[Spectral inheritance due to rotation]\label{inheritrotation}Let $H$ be a hypergraph. Suppose that $f$ is a rotation of order $n$ with $0$-th component $U_0$, and $M_H$ is a $f$-compatible matrix associated with $H$. Each eigenvalue of $M^{\omega}_H(f,U_0) $ is an eigenvalue of $ M_H$ for any $n$-th root of unity $\omega\ne1$.
	\end{thm}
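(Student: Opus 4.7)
Fix an $n$-th root of unity $\omega\neq 1$ and an eigenpair $(\lambda,y)$ of the $m\times m$ rotation matrix $M^{\omega}_H(f,U_0)$, where $m=|U_0|$. The plan is to lift $y$ to a function $x:V(H)\to\mathbb{C}$ on the full vertex set and to verify that $M_Hx=\lambda x$; since $x$ restricted to $U_0$ equals the nonzero vector $y$, this exhibits $\lambda$ as an eigenvalue of $M_H$. The natural lift is
\[
x(v)=\begin{cases} \omega^{i}\,y(u) & \text{if } v=f^{i}(u)\text{ with }u\in U_0,\ 0\le i\le n-1,\\ 0 & \text{if } v\in X.\end{cases}
\]
The definition of rotation guarantees that every $v\notin X$ lies on a unique $f$-orbit of length (dividing) $n$ originating in $U_0$, so $x$ is well defined once we use $\omega^{n}=1$.

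Next I would verify $(M_Hx)(u)=\lambda x(u)$ in three cases. For $u\in U_0$, grouping the summation $\sum_{w\in V(H)}m_{uw}x(w)$ according to the components $U_0,\ldots,U_{n-1},X$, parametrising each $w\in U_i$ as $w=f^{i}(v)$ with $v\in U_0$, and noting that the $X$-block contributes $0$, immediately collapses to $\sum_{v\in U_0}\bigl(\sum_{i=0}^{n-1}\omega^{i}m_{u,f^{i}(v)}\bigr)y(v)=\bigl(M^{\omega}_H(f,U_0)y\bigr)(u)=\lambda y(u)=\lambda x(u)$. For $u=f^{j}(u')$ with $u'\in U_0$ and $1\le j\le n-1$, I would use $f$-compatibility in the form $m_{uw}=m_{u',f^{-j}(w)}$, then reindex $w\mapsto f^{j}(w')$; the identity $x(f^{j}(w'))=\omega^{j}x(w')$ (valid on all of $V(H)$, including $X$, thanks to $\omega^{n}=1$ and the convention $x|_X=0$) reduces $(M_Hx)(u)$ to $\omega^{j}(M_Hx)(u')=\omega^{j}\lambda x(u')=\lambda x(u)$, using the $U_0$ case.

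The delicate step, and the only place where the hypothesis $\omega\neq 1$ enters, is the case $u\in X$. Here we must show $(M_Hx)(u)=0$. Since $f(u)=u$, $f$-compatibility gives $m_{u,f^{i}(v)}=m_{u,f^{i+1}(v)}$ for every $v\in U_0$, so the quantity $c_{u,v}:=m_{u,f^{i}(v)}$ is independent of $i$. Splitting $(M_Hx)(u)$ as above then yields
\[
(M_Hx)(u)=\sum_{v\in U_0}c_{u,v}\,y(v)\sum_{i=0}^{n-1}\omega^{i}=0,
\]
because $\sum_{i=0}^{n-1}\omega^{i}=0$ for any $n$-th root of unity $\omega\neq 1$. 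This is the key obstacle: without $\omega\neq 1$ the $X$-equations fail, and one would be forced to design the lift so that $x$ is nontrivial on $X$, which corresponds precisely to the quotient-matrix eigenvalues already recovered by Proposition~\ref{lem-orbit}. Assembling the three cases completes the proof.
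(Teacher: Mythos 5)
Your proposal is correct and follows essentially the same route as the paper's proof: the same lift $x_\omega(v)=\omega^i y(f^{-i}(v))$ on $U_i$ and $0$ on $X$, the same grouping of the sum over components, and the same use of $\sum_{i=0}^{n-1}\omega^i=0$ to kill the $X$-equations. The only cosmetic difference is that you treat $u\in U_0$ and $u\in U_j$ ($j\ge 1$) as separate sub-cases and reduce the latter to the former, whereas the paper handles all $j$ in one computation.
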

 \begin{proof}
		Since $f$ is a rotation with the $0$-th component $U_0$, $V(H)$ can be partitioned into disjoint sets such that 
		$$V(H)=U_0\cup U_1\cup\ldots U_{n-1}\cup X,$$ and $f(v)=v$ for some $v\in X$ and $U_{i+1}=\{f(v):v\in U_i\}$ for all $i=0,\ldots,n-2$ and $U_{0}=\{f(v):v\in U_{n-1}\}$.
		
		Suppose that $\lambda $ is an eigenvalue of $M_H(f,U_0) $ with eigenvector $$x:U_0\to\mathbb{C}.$$
		We define $x_{\omega}:V(H)\to\mathbb{C}$ as
		$$x_\omega(v)=\begin{cases}
			\omega^i x(f^{-i}(v))&\text{~if~}v\in U_i,\\
			0&\text{~if~} v\in X.
		\end{cases}$$

  Thus, for all $u\in U_j$, there exists $u^\prime\in U_0$ such that $u=f^j(u^\prime)$. Therefore,
		\begin{align*}
			(M_Hx_{\omega})(u)&=\sum\limits_{v\in V(H)}m_{uv}x_{\omega}(v)=\sum\limits_{i=0}^{n-1}\sum\limits_{v\in U_0}m_{uf^i(v)}x_{\omega}(f^i(v))+\sum\limits_{v\in X}m_{uv}0\\
			&=\sum\limits_{i=0}^{n-1}\sum\limits_{v\in U_0}m_{uf^i(v)}\omega^i x(v)=\sum\limits_{v\in U_0}\sum\limits_{i=0}^{n-1}m_{f^j(u^\prime)f^i(v)}\omega^i x(v)\\
			&=\omega^j\sum\limits_{v\in U_0}\left(\sum\limits_{i=0}^{n-1}\omega^{i-j}m_{u^\prime f^{i-j}(v)} \right)x(v)\\
			&=\omega^j(M^{
				\omega
			}_H(f,U_0) x)(u)=\omega^j\lambda x(u)=\lambda x_{\omega}(u)
		\end{align*}
		For all $u\in X$, since $\omega\ne 1$ is an $n$-th root of unity,
		\begin{align*}
				(M_Hx_{\omega})(u)&=\sum\limits_{v\in V(H)}m_{uv}x_{\omega}(v)=\sum\limits_{i=0}^{n-1}\sum\limits_{v\in U_0}m_{f^i(u)f^i(v)}x_{\omega}(v)\\
				&=\sum\limits_{v\in U_0}m_{uv}\left(\sum\limits_{i=0}^{n-1}\omega^i\right)x(v)=0
		\end{align*}
	\end{proof}
\begin{exm}\rm
    Consider the hypergraph $H$ illustrated in \Cref{fig:rot}.  We have described a rotation $f$ of order $3$ in the hypergraph $H$ in the \Cref{ex-hyp-rot}. Other than $1$ the third-roots of  unity are $\omega_1=\frac{1}{2}(-1+i\sqrt{3})$, and $\omega_2=\frac{1}{2}(-1-i\sqrt{3})$.  Therefore, ${A^{(r)}}^{\omega_1}_H(f,U_0)=\left[\begin{smallmatrix}
        -1 & 2 &  -\omega_1\\ 2 & -1 & -\omega_1\\ 1+\omega_1 &  1+\omega_1 & 0 
    \end{smallmatrix}\right]$. The eigenvalues of this matrix are -3, -1, 2 with eigenvectors  $x^{(11)}:U_0\to\mathbb{C}$, $x^{(12)}:U_0\to\mathbb{C}$, and $x^{(13)}:U_0\to\mathbb{C}$, respectively, and which are defined as
    $$x^{(11)}=\left(\begin{smallmatrix}
        2&3&4\\
        -1&1&0
    \end{smallmatrix}\right), 
    x^{(12)}=\left(\begin{smallmatrix}
        2&3&4\\
       \frac{ \omega_1}{2}&\frac{ \omega_1}{2}&1
    \end{smallmatrix}\right),\text{~and~} x^{(12)}=\left(\begin{smallmatrix}
        2&3&4\\
       -\omega_1&-\omega_1&1
    \end{smallmatrix}\right).$$
    As \Cref{inheritrotation} suggests, $-3, -1, 2$ are eigenvalues of $A^{(r)}_H$ with the corresponding eigenvectors  $x^{(11)}_{\omega_1}$, $x^{(12)}_{\omega_1}$, and $x^{(13)}_{\omega_1}$.

    The other rotation matrix is  ${A^{(r)}}^{\omega_2}_H(f,U_0)=\left[\begin{smallmatrix}
        -1 & 2 &  -\omega_2\\ 2 & -1 & -\omega_2\\ 1+\omega_2 &  1+\omega_2 & 0 
    \end{smallmatrix}\right]$ with its eigenvalues $ -3, -1, 2$ , and the corresponding  eigenvectors are $x^{(21)}:U_0\to\mathbb{C}$, $x^{(22)}:U_0\to\mathbb{C}$, and $x^{(23)}:U_0\to\mathbb{C}$, respectively, and which are defined as 
     $$x^{(21)}=\left(\begin{smallmatrix}
        2&3&4\\
        -1&1&0
    \end{smallmatrix}\right), 
    x^{(22)}=\left(\begin{smallmatrix}
        2&3&4\\
       \frac{ \omega_2}{2}&\frac{ \omega_2}{2}&1
    \end{smallmatrix}\right),\text{~and~} x^{(32)}=\left(\begin{smallmatrix}
        2&3&4\\
       -\omega_2&-\omega_2&1
    \end{smallmatrix}\right).$$
    Consequently, $ -3, -1, 2$ are eigenvalues of $A^{(r)}_H$ with the corresponding eigenvectors  $x^{(11)}_{\omega_2}$, $x^{(12)}_{\omega_2}$, and $x^{(13)}_{\omega_2}$.
\end{exm}

    In the \Cref{inheritrotation}, the condition $\omega\ne 1$ is necessary to show  $(M_Hx_{\omega})(u)=0$ for all $u\in X$. For $\omega=1$, $(M_Hx_{\omega})(u)=0$ might not hold for some $u\in X$, as a consequence an eigenvalue of $M^{1}_H(f,U_0) $ might not be an eigenvalue of $M_H$. For instance, for the hypergraph $H$ considered in \Cref{ex-hyp-mat}, the rotation matrix ${A^{(r)}}^{1}_H(f,U_0)=\left[\begin{smallmatrix}
         2 & 5 & 2\\ 5 & 2 & 2\\ 2 & 2 & 0 
    \end{smallmatrix}\right]$ and its eigenvalues are $-3,-1,8$ but $8$ is not an eigenvalue of $A^{(r)}_H$.  If the invariant set $X=\emptyset$, then we do not require the condition $\omega\ne1$, and in the next result, we show that the complete spectrum of $M_H$ can be described in terms of the spectra of $M^{\omega}_H(f, U_0) $ for all the $n$-th roots of unity.

\begin{cor}\label{xnull-cor}
    Let $H$ be a hypergraph and $f$ be a rotation of order $n$ in $H$ with the $i$-th component $U_i$, such that, $V(H)=\bigcup\limits_{i=0}^{n-1}U_i$, and the invariant set $X=\emptyset$. Suppose that $M_H$ is a $f$-compatible matrix associated with $H$. Each eigenvalue of $M^{\omega}_H(f,U_0) $ is an eigenvalue of $ M_H$ for any $n$-th root of unity $\omega$.
\end{cor}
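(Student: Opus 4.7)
The plan is to adapt the proof of Theorem \ref{inheritrotation} and observe that the hypothesis $\omega \neq 1$ was used there \emph{only} to handle vertices in the invariant set $X$. Since the current hypothesis forces $X = \emptyset$, that case becomes vacuous, and the same construction goes through with no restriction on $\omega$ whatsoever.

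Concretely, starting from an eigenvalue $\lambda$ of $M^{\omega}_H(f, U_0)$ with eigenvector $x: U_0 \to \mathbb{C}$, I would define $x_\omega : V(H) \to \mathbb{C}$ exactly as in the theorem: $x_\omega(v) = \omega^i x(f^{-i}(v))$ for $v \in U_i$. Because $V(H) = \bigcup_{i=0}^{n-1} U_i$, there is no longer any ambiguity or leftover case to treat. For any $u \in V(H)$, choose $j$ and $u' \in U_0$ with $u = f^j(u')$, and compute $(M_H x_\omega)(u)$ by splitting the sum $\sum_{v \in V(H)} m_{uv} x_\omega(v)$ into blocks indexed by the components $U_i$. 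Using the $f$-compatibility identity $m_{f^j(u') f^i(v)} = m_{u'\, f^{i-j}(v)}$ and reindexing $i \mapsto i - j$ modulo $n$ (valid because $\omega^n = 1$), this yields
\[
(M_H x_\omega)(u) \;=\; \omega^j \left( M^{\omega}_H(f, U_0)\, x\right)(u') \;=\; \omega^j \lambda\, x(u') \;=\; \lambda\, x_\omega(u).
\]
Thus $x_\omega$ is an eigenvector of $M_H$ with eigenvalue $\lambda$, and it is nonzero because $x_\omega|_{U_0} = x \neq 0$.

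The main — and essentially only — point to verify is that the computation above never invoked $\omega \neq 1$. The identity $\sum_{i=0}^{n-1} \omega^i = 0$, which fails at $\omega = 1$, was used in the original theorem only to show $(M_H x_\omega)(u) = 0$ for $u \in X$; with $X = \emptyset$ by hypothesis, this step is no longer needed. Hence no real obstacle remains beyond carefully tracking the case analysis in Theorem \ref{inheritrotation} and noting that the branch requiring $\omega \neq 1$ drops out under the hypothesis $X = \emptyset$.
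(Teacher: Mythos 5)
Your proposal is correct and follows essentially the same route as the paper: the paper invokes Theorem \ref{inheritrotation} for $\omega\ne 1$ and then repeats the lifted-eigenvector construction $x_\omega(v)=x(f^{-i}(v))$ separately for $\omega=1$, which is exactly your observation that the only use of $\omega\ne 1$ was to kill the contribution on $X$. Your unified presentation is a slightly cleaner packaging of the same argument.
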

\begin{proof}
    If $\omega\ne 1$, then it follows from the \Cref{inheritrotation} that each eigenvalue of $M^{\omega}_H(f,U_0) $ is an eigenvalue of $ M_H$. For $M^{1}_H(f,U_0) $ we have the following argument.

    In that case if $M^{1}_H(f,U_0) x=\lambda x$ for some $x:U_0\to\mathbb{C}$, then $x_{\omega}:V(H)\to\mathbb{C}$ is such that $x_{\omega}(v)=x(f^{-i}(v))$ for any $v\in U_i$ for $i=0,1,\ldots,n-1$. Consequently, for any $u\in U_i$, there exists $u'\in U_0$ such that $f^i(u')=u $, and $$(M_Hx_{\omega})(v)=\sum\limits_{v\in V(H)}m_{uv}x_{\omega}(v)=\sum\limits_{j=0}^{n-1}\sum\limits_{v\in U_0}m_{u'f^j(v)}x(v)=(M^{1}_H(f,U_0) x)(u')=\lambda x(u')=\lambda x_{\omega}(u).$$ Thus, $\lambda $ is also an eigenvalue of $M_H$ with  an eigenvector $x_{\omega}$.
    
\end{proof}
  Suppose that $f$ is a rotation of order $n$ in a hypergraph $H$. For each $v\in V(H)$, the \emph{$f$-orbit} of $v$ is $O_f(v)=\{f^i(v):\text{~for each non-negative integer~}i\}$.
	 If $f$ is a rotation of order $n$ in $H$ with $i$-th component of $f$ is $U_i$ then for two distinct $u,v\in U_i$, we have $O_f(u)\cap O_f(v)=\emptyset $. If $X$ is the invariant set under the rotation $f$ then $O_f(v)=\{v\}$ for all $v\in X$. The rotation orbits lead us to the following disjoint partition of $V(H)$.
	$$V(H)=\left(\bigcup\limits_{v\in U_0}O_f(v)\right)\cup \left(\bigcup\limits_{v\in X}O_f(v)\right) $$
	We refer to this partition of $V(H)$ as $f$-orbit partition.
Since each rotation is an automorphism, we have the following Corollary of the \Cref{aut-equit}.
\begin{cor}
		Let $H$ be a hypergraph. Suppose that $f$ is a rotation of order $n$ in $H$. The $f$-orbit partition is an equitable partition for any $f$-compatible matrix $M_H$.
\end{cor}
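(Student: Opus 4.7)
The plan is to observe that this corollary is essentially an immediate specialization of Proposition \ref{aut-equit} (which was already proved), applied to the case where the automorphism happens to be a rotation. So the whole argument is a matter of checking that the hypotheses of Proposition \ref{aut-equit} are met, and that the notion of $f$-orbit used to define the $f$-orbit partition for a rotation agrees with the one used in Proposition \ref{aut-equit}.

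First I would recall, from the definition of a rotation of order $n$, that any such $f$ is a bijection $V(H)\to V(H)$ which sends hyperedges to hyperedges, and hence lies in $Aut(H)$. Next I would note that the $f$-orbit partition defined for the rotation $f$, namely
\[
V(H)=\left(\bigcup_{v\in U_0}O_f(v)\right)\cup\left(\bigcup_{v\in X}O_f(v)\right),
\]
is exactly the partition of $V(H)$ into the equivalence classes of the relation $\mathfrak{R}_f$ used in Proposition \ref{aut-equit}: the orbits through vertices in $U_0$ exhaust $U_0\cup U_1\cup\cdots\cup U_{n-1}$, while each $v\in X$ is fixed by $f$ and so gives the singleton orbit $\{v\}$.

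Having identified the two partitions, the conclusion follows by applying Proposition \ref{aut-equit} to the automorphism $f$ and to the $f$-compatible matrix $M_H$: the proposition guarantees that for any two orbits $O_f(u)$ and $O_f(v)$, the row sum $\sum_{w\in O_f(v)} m_{u'w}$ is the same for every $u'\in O_f(u)$, which is exactly the equitable partition condition for $M_H$. I do not anticipate any genuine obstacle; the only thing to be slightly careful about is making the identification of the two notions of orbit explicit, so that the appeal to Proposition \ref{aut-equit} is unambiguous.
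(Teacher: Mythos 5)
Your proposal is correct and matches the paper exactly: the paper states this result without a separate proof, noting only that "since each rotation is an automorphism" it follows immediately as a corollary of Proposition \ref{aut-equit}. Your additional check that the rotation's $f$-orbit partition coincides with the $\mathfrak{R}_f$-equivalence-class partition is a reasonable bit of explicitness, but the substance of the argument is the same.
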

 The above Corollary and the \Cref{lem-orbit} lead us to the following result. 
 \begin{cor}\label{lem-orbit-rot}
		Let $H$ be a hypergraph. Suppose that $f$
		is a rotation of order $n$ in $H$. For any $f$-compatible matrix $M_H$, each eigenvalue of $[M_H/\mathcal{O}_f] $ is an eigenvalue of $M_H$.
	\end{cor}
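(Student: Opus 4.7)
The plan is to observe that this corollary is an immediate specialization of Proposition \ref{lem-orbit} to the case where the automorphism happens to be a rotation. Since a rotation of order $n$ is, by its definition, an element of $Aut(H)$, every hypothesis of the earlier proposition is met once we take $f$ in that proposition to be our rotation.

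First, I would point out that the $f$-orbit partition as described just above the corollary (splitting $V(H)$ into $\bigcup_{v\in U_0}O_f(v)$ together with the singleton orbits on $X$) coincides with the general $f$-orbit partition introduced before Proposition \ref{aut-equit}: both are the equivalence classes of the relation $u\sim v \iff v=f^i(u)$ for some integer $i$. So the collection $\mathcal{O}_f$ and the quotient matrix $[M_H/\mathcal{O}_f]$ carry the same meaning in both settings. Next, by the Corollary immediately preceding (which specializes Proposition \ref{aut-equit} to rotations), the $f$-orbit partition is equitable for any $f$-compatible $M_H$, so the quotient matrix $[M_H/\mathcal{O}_f]$ is well defined. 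Finally, applying Proposition \ref{lem-orbit} verbatim to $f$ regarded as an automorphism yields that each eigenvalue of $[M_H/\mathcal{O}_f]$ is an eigenvalue of $M_H$, completing the proof.

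There is essentially no obstacle here; the entire content of the corollary is that the previous results apply in the rotational special case. The only sanity check to carry out is the notational one above, ensuring that specializing from $f\in Aut(H)$ to $f$ being a rotation does not alter either $\mathcal{O}_f$ or the definition of $[M_H/\mathcal{O}_f]$. Since this is automatic from the definitions, the proof reduces to a one-line invocation of Proposition \ref{lem-orbit} together with the preceding Corollary.
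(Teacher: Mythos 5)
Your proposal is correct and matches the paper exactly: the paper derives this corollary by combining the preceding corollary (that the $f$-orbit partition of a rotation is equitable) with Proposition \ref{lem-orbit}, which is precisely the specialization argument you give. No further comment is needed.
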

\begin{exm}\label{exm-quotient-lem}
    \rm Recall the hypergraph $H$, and associated matrix  $A^{(r)}_H$  illustrated in \Cref{fig:rot}.  The quotient matrix  $[A^{(r)}_H/\mathcal{O}_f]=\left[\begin{smallmatrix}
         0 & 3 & 3 & 0\\ 1 & 2 & 5 & 2\\ 1 & 5 & 2 & 2\\ 0 & 2 & 2 & 0 
    \end{smallmatrix}\right]$ has the eigenvalues $-3$, $ 0$, $ \frac{7}{2}-\frac{\sqrt{105}}{2}$ , and $\frac{\sqrt{105}}{2}+\frac{7}{2}$ with the eigenvectors $y^{(1)}:\mathcal{O}_f\to\mathbb{C}$, $y^{(2)}:\mathcal{O}_f\to\mathbb{C}$, $y^{(3)}:\mathcal{O}_f\to\mathbb{C}$, and $y^{(4)}:\mathcal{O}_f\to\mathbb{C}$, respectively, and which are defined as 
    $ y_1=\left(\begin{smallmatrix}
        O_f(1)&O_f(2)&O_f(3)&O_f(4)\\
        -2&0&0&1
    \end{smallmatrix}\right),
    y_2=\left(\begin{smallmatrix}
        O_f(1)&O_f(2)&O_f(3)&O_f(4)\\
        0&-1&1&0
    \end{smallmatrix}\right),
     y_3=\left(\begin{smallmatrix}
        O_f(1)&O_f(2)&O_f(3)&O_f(4)\\
        \frac{3}{2}&\frac{7}{8}-\frac{\sqrt{105}}{8}&\frac{7}{8}-\frac{\sqrt{105}}{8}&1
    \end{smallmatrix}\right), \text{~and~} y_4=\left(\begin{smallmatrix}
        O_f(1)&O_f(2)&O_f(3)&O_f(4)\\
        \frac{3}{2}&\frac{7}{8}+\frac{\sqrt{105}}{8}&\frac{7}{8}+\frac{\sqrt{105}}{8}&1
    \end{smallmatrix}\right)$. By \Cref{lem-orbit-rot}, $-3$, $ 0$, $ \frac{7}{2}-\frac{\sqrt{105}}{2}$ , and $\frac{\sqrt{105}}{2}+\frac{7}{2}$ are eigenvalues of  $A^{(r)}_H$ with eigenvectors ${y^{(1)}}_{\mathcal{O}_f}$,  ${y^{(2)}}_{\mathcal{O}_f}$, ${y^{(3)}}_{\mathcal{O}_f}$, and ${y^{(4)}}_{\mathcal{O}_f}$, respectively.
\end{exm}

 The following result shows that the eigenvalues of $M_H$ can be computed from the eigenvalues of smaller matrices.
	\begin{thm}\label{rotation-final}
		Let $H$ be a hypergraph. Suppose that $f$ is a rotation of order $n$ with $0$-th component $U_0$, and $M_H$ is a $f$-compatible matrix associated with $H$. The complete spectrum of $M_H$ are the eigenvalues of  $n-1$ matrices  $M^{\omega}_H(f,U_0) $ of order $|U_0|$ for all $n$-th root of unity $\omega\ne 1$, and the eigenvalues of the matrix $[M_H/\mathcal{O}_f]$ of order $|U_0|+|X|$.
	\end{thm}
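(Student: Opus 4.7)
The plan is to decompose $\mathbb{C}^{V(H)}$ into the eigenspaces of $P_f$, identify the restriction of $M_H$ to each piece with one of the advertised small matrices, and then count dimensions to confirm that every eigenvalue is accounted for. The decomposition is available because $M_H$ and $P_f$ commute, as established earlier in the paper.

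First I would observe that $f^n=\mathrm{id}_{V(H)}$: on $U_0\cup\dots\cup U_{n-1}$ by the definition of a rotation of order $n$, and on $X$ because $f$ fixes $X$ pointwise. Hence $P_f^n=I$, so $P_f$ is diagonalizable over $\mathbb{C}$ with spectrum contained in the $n$-th roots of unity. Reading off the orbit decomposition of $P_f$, each of the $|U_0|$ orbits of length $n$ contributes one eigenvector to each $V_\omega(P_f)$, while each singleton orbit in $X$ contributes one eigenvector only to $V_1(P_f)$. This gives $\dim V_\omega(P_f)=|U_0|$ for $\omega\ne 1$, $\dim V_1(P_f)=|U_0|+|X|$, and a total dimension of $n|U_0|+|X|=|V(H)|$.

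Because $M_H$ preserves each $V_\omega(P_f)$, the spectrum of $M_H$ (with multiplicities) is the multiset union, over $n$-th roots of unity $\omega$, of the spectra of the restrictions $M_H|_{V_\omega(P_f)}$. I would then identify each restriction with one of the small matrices from the statement. For $\omega\ne 1$, the lifting map $\Phi_\omega:\mathbb{C}^{U_0}\to\mathbb{C}^{V(H)}$ sending $x\mapsto x_\omega$, introduced in the proof of \Cref{inheritrotation}, lands inside $V_\omega(P_f)$ by a direct verification using $f^n=\mathrm{id}$ with a case split on $v\in U_i$ for $i<n-1$, $v\in U_{n-1}$ (where one uses the identity $f^{-(n-1)}=f$), and $v\in X$. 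It is injective since $\Phi_\omega(x)|_{U_0}=x$, and then the dimension count forces $\Phi_\omega$ to be an isomorphism onto $V_\omega(P_f)$. The calculation already performed in the proof of \Cref{inheritrotation} shows $M_H\Phi_\omega=\Phi_\omega M^{\omega}_H(f,U_0)$, so $M_H|_{V_\omega(P_f)}$ is similar to $M^{\omega}_H(f,U_0)$. For $\omega=1$, the map $y\mapsto y_{\mathcal{O}_f}$ is a linear isomorphism $\mathbb{C}^{\mathcal{O}_f}\to V_1(P_f)$, since $V_1(P_f)$ consists precisely of vectors constant on every $f$-orbit and both sides have dimension $|U_0|+|X|$; the computation in the proof of \Cref{lem-orbit} then shows that this map intertwines $[M_H/\mathcal{O}_f]$ with $M_H|_{V_1(P_f)}$.

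Combining the three identifications, the spectrum of $M_H$ is the disjoint multiset union of the spectrum of $[M_H/\mathcal{O}_f]$ (order $|U_0|+|X|$) together with the spectra of the $n-1$ rotation matrices $M^{\omega}_H(f,U_0)$ (each of order $|U_0|$), which is exactly the theorem's claim, and the multiplicities add up correctly to $|V(H)|$. The main technical point I anticipate is the verification that $\Phi_\omega(x)$ actually lies in $V_\omega(P_f)$: it is routine but requires careful bookkeeping, particularly using $f^n=\mathrm{id}$ to close up the cyclic orbit at the boundary $U_{n-1}\to U_0$. Everything else is essentially a repackaging of \Cref{inheritrotation} and \Cref{lem-orbit} combined with the commutation $M_HP_f=P_fM_H$.
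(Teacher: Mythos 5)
Your proof is correct, but it takes a genuinely different route from the paper's. The paper argues by exhibiting an explicit eigenbasis: it lifts $|U_0|$ linearly independent eigenvectors of each rotation matrix via $y\mapsto y_{\omega}$ and $|U_0|+|X|$ eigenvectors of $[M_H/\mathcal{O}_f]$ via $z\mapsto z_{\mathcal{O}_f}$, and then checks by direct inner-product computations (geometric sums of roots of unity) that the resulting $n|U_0|+|X|$ vectors are pairwise orthogonal across families and hence linearly independent. You instead block-diagonalize: you split $\mathbb{C}^{V(H)}=\bigoplus_{\omega}V_{\omega}(P_f)$ using $P_f^n=I$, show each lifting map is an isomorphism onto the corresponding $P_f$-eigenspace, and conclude via the intertwining relations from \Cref{inheritrotation} and \Cref{lem-orbit} that $M_H$ is similar to the direct sum of the $n-1$ rotation matrices and the quotient matrix. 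The orthogonality checks of the paper are subsumed in your approach by the fact that the eigenspaces of the unitary matrix $P_f$ are mutually orthogonal. What your route buys is generality: the paper's proof tacitly assumes each small matrix admits a full set of linearly independent eigenvectors (it selects $m$ of them from an $m\times m$ matrix), which can fail for a non-diagonalizable $f$-compatible $M_H$, whereas your similarity argument yields the factorization of the characteristic polynomial, $\chi_{M_H}=\chi_{[M_H/\mathcal{O}_f]}\cdot\prod_{\omega\ne 1}\chi_{M^{\omega}_H(f,U_0)}$, and hence the full spectrum with algebraic multiplicities, unconditionally. The only point you should make explicit is the identity $f^n=\mathrm{id}_{V(H)}$ (equivalently, $|O_f(v)|=n$ on the active domain): the displayed definition of a rotation only forces $f$ to cycle the blocks $U_0,\ldots,U_{n-1}$, and the paper uses the stronger pointwise property implicitly throughout, so your use of it is consistent but worth flagging as a hypothesis.
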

        \begin{proof}
            If $H$ is a hypergraph and $f$ is a rotation of order $n$ with $0$-th component $U_0$, then $|V(H)|=n|U_0|+|X|$. Let $\omega_1,\omega_2,\ldots,\omega_{n-1}$ be  the roots of $x^n=1$ other than $1$.
  Suppose that $|U_0|=m$ and $\{y^{(ij)}:j=1,2,\ldots,m\}$ are the linearly  independent eigenvectors of the $m\times m$ matrix $M^{\omega_i}_H(f,U_0) $ for all $i=1,2,\ldots,n-1$. Let $|X|=p$, and $\{z^{(i)}:i=1,2,\ldots,m+p\}$ are the   linearly  independent eigenvectors of the $(m+p)\times (m+p)$ matrix $[M_H/\mathcal{O}_f]$. Suppose that  $\mathcal{Y}_i=\{y^{(ij)}_{\omega_i}:j=1,2,\ldots,m\}$, and $\mathcal{Y}=\bigcup\limits_{i=1}^{n-1}\mathcal{Y}_i=\{y^{(ij)}_{\omega_i}:j=1,2,\ldots,m;i=1,2,\ldots,n-1\}$, and $\mathcal{Z}=\{z^{(i)}_{O_f}:i=1,2,\ldots,m+p\}$. It is enough to show that $\mathcal{Y}\cup\mathcal{Z}$ are all the  linearly  independent eigenvectors of $M_H$. By \Cref{inheritrotation}, and \Cref{lem-orbit-rot}, it is evident that all the elements of $\mathcal{Y}\cup\mathcal{Z}$ are eigenvectors of $M_H$. Since the cardinality of $\mathcal{Y}\cup\mathcal{Z}$ is $nm+p$, that is the order of $M_H$, it is enough to show the collection $\mathcal{Y}\cup\mathcal{Z}$ is linearly independent. 
Clearly,  $\{y^{(ij)}:j=1,2,\ldots,m\}$ is a linearly independent set of eigenvectors. Consequently, $\mathcal{Y}_i=\{y^{(ij)}_{\omega_i}:j=1,2,\ldots,m\}$ is also linearly independent for all $i=1,2,\ldots,n-1$. For two distinct $p,q\in\{1,2,\ldots,n-1\}$, if $y^{(pr)}_{\omega_p}\in \mathcal{Y}_p$, $y^{(qs)}_{\omega_q}\in \mathcal{Y}_q$, and $\langle\cdot,\cdot\rangle$ is the usual inner product on $\mathbb{C}^n$, then
  \begin{align*}
      \langle y^{(pr)}_{\omega_p},y^{(qs)}_{\omega_q}\rangle=\sum\limits_{v\in V(H)}y^{(pr)}_{\omega_p}(v)\overline{y^{(qs)}_{\omega_q}(v)}=\sum\limits_{u\in U_0}y^{(pr)}(u)\overline{y^{(qs)}(u)}\sum\limits_{i=0}^{n-1}(\omega_p\overline{\omega_q})^i=0.
  \end{align*}
  Therefore, $\mathcal{Y}=\bigcup\limits_{i=1}^{n-1}\mathcal{Y}_i=\{y^{(ij)}_{\omega_i}:j=1,2,\ldots,m;i=1,2,\ldots,n-1\}$ is linearly independent set of eigenvectors.   The collection $\mathcal{Z}$ is linearly independent. The fact $1+\omega+\ldots+\omega^{n-1}=0$ for any root $\omega(\ne 1)$ of the equation $x^n=1$, establishes that any element of $\mathcal{Y}$ is orthogonal to each element of $\mathcal{Z}$ in the usual inner product space. Therefore, $\mathcal{Y}\cup\mathcal{Z}$ is a collection of  linearly independent eigenvectors of $M_H$.
        \end{proof}

Let $f$ be a rotation of order $n$ in a hypergraph $H$ and $|O_f(v)|$ be the cardinality of the orbit of a vertex $v\in V(H)$. 
If $U_i$ is the $i$-th component of the rotation $f$, for all $i=0,1,\ldots,n-1$, and $X$ is the invariant set under $f$, then the cardinality of the orbit of $v$, $|O_f(v)|=n$ for any $v\in U_i$, for any $i=0,\ldots,n-1$, and $|O_f(v)|=1$ if $v\in X$. 
If the invariant set is empty, then $[M_H/\mathcal{O}_f]=M^{1}_H(f,U_0)$ and then the complete list of eigenvalues of $M_H$ is the union of the eigenvalues of the collection of matrices $\{M^{\omega}_H(f,U_0):\omega\text{~is a root of ~}x^n-1=0\}$. In \cite{equit_benjamin1}, a graph automorphism with orbit sizes equal to either $n>1$ or $1$ is referred to as \emph{basic automorphisms}, and a graph automorphism, with all its orbits of the same size, is referred to as \emph{uniform automorphism}. Thus, any rotation in a hypergraph is a basic automorphism of the hypergraph, and a rotation with an empty invariant set is a uniform automorphism.

For a natural number $n>1$, $\{\omega_0,\omega_1,\ldots,\omega_{n-1}\}$ is the collection of all the roots of $x^n-1=0$, where $\omega_k=e^{\iota\frac{2k\pi}{n}}$, for $k=0,1,\ldots,n-1$. Given a rotation $f$ of order $n$ in a hypergraph $H$, if an $f$-compatible matrix is symmetric then the equality $ \omega_j^{-1}=\omega_{n-j}$ leads us to
$$\sum\limits_{i=0}^{n-1}\omega^i_jm_{uf^i(v)}=\sum\limits_{i=0}^{n-1}\omega^i_{n-j}m_{vf^i(u)}.   $$
Therefore, the pair of rotation matrices $M^{\omega_j}_H(f,U_0) $, and $M^{\omega_{n-j}}_H(f,U_0) $ have the same eigenvalues. Consequently, the following Corollary of the \Cref{rotation-final} follows.
\begin{cor}\label{cor-even-odd}
    Let $H$ be a hypergraph, and $f$ be a rotation of order $n$ in $H$. Given any symmetric $f$-compatible matrix $M_H$, if $n$ is odd, then there can be at most $|U_0|+|X|$ many simple eigenvalues of $M_H$, otherwise, if $n$ is even, then there can be $2|U_0|+|X|$ number of simple eigenvalues, where $U_0$, and $X$ are, respectively, the $0$-th component, and the invariant set of the rotation $f$.
\end{cor}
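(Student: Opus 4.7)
The plan is to combine \Cref{rotation-final} with the symmetry-driven pairing of rotation matrices noted in the paragraph just before the statement. By \Cref{rotation-final}, the spectrum of $M_H$ decomposes, as a multiset, into the spectra of the $n-1$ rotation matrices $M^{\omega}_H(f,U_0)$ for $\omega$ ranging over the $n$-th roots of unity different from $1$, together with the spectrum of the quotient matrix $[M_H/\mathcal{O}_f]$, of orders $|U_0|$ and $|U_0|+|X|$ respectively. An eigenvalue of $M_H$ is simple precisely when its total multiplicity in this decomposition equals $1$; matching algebraic multiplicities in $M_H$ with the multiplicities in the decomposition is already handled by the explicit linearly independent family of $n|U_0|+|X|$ eigenvectors constructed inside the proof of \Cref{rotation-final}, which I would simply cite.

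For the pairing step, I would invoke the identity recorded in the text immediately above the statement: when $M_H$ is symmetric and $\omega_j^{-1}=\omega_{n-j}$, the matrices $M^{\omega_j}_H(f,U_0)$ and $M^{\omega_{n-j}}_H(f,U_0)$ share the same spectrum. Consequently, for every index $j\in\{1,\dots,n-1\}$ with $j\neq n-j$, any eigenvalue of $M^{\omega_j}_H(f,U_0)$ contributes multiplicity at least $2$ to the multiset decomposition of the spectrum of $M_H$, and therefore cannot be simple in $M_H$.

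The result then follows by splitting into parity cases. If $n$ is odd, no $j\in\{1,\dots,n-1\}$ satisfies $j=n-j$, so every rotation matrix sits in a nontrivial pair; the simple eigenvalues of $M_H$ must all come from $[M_H/\mathcal{O}_f]$, yielding the bound $|U_0|+|X|$. If $n$ is even, the unique self-paired index is $j=n/2$, corresponding to $\omega=-1$; the matrix $M^{-1}_H(f,U_0)$ is then not forced by the pairing to contribute doubled eigenvalues, so simple eigenvalues may come either from $M^{-1}_H(f,U_0)$ (at most $|U_0|$) or from $[M_H/\mathcal{O}_f]$ (at most $|U_0|+|X|$), for a total of at most $2|U_0|+|X|$. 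I do not anticipate any real obstacle in this argument; the main detail to verify carefully is just the parity count of unpaired indices in $\{1,\dots,n-1\}$ under the involution $j\mapsto n-j$, together with the fact that the self-paired case occurs exactly when $n$ is even.
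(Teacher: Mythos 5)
Your proposal is correct and takes essentially the same route as the paper: the paper's proof likewise combines the decomposition of \Cref{rotation-final} with the observation that symmetry forces $M^{\omega_j}_H(f,U_0)$ and $M^{\omega_{n-j}}_H(f,U_0)$ to share a spectrum, so that only the quotient matrix (and, when $n$ is even, the self-paired matrix at $j=n/2$) can contribute simple eigenvalues. Your write-up is merely a more explicit version of the paper's terse argument, including the parity analysis of the involution $j\mapsto n-j$.
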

\begin{proof}
    Since $M_H$ is symmetric, the pair of rotation matrices $M^{\omega_j}_H(f,U_0) $, and $M^{\omega_{n-j}}_H(f,U_0) $ have the same eigenvalues for $j=1,\ldots,n-1$. If $n$ is even then $M^{\omega_j}_H(f,U_0) =M^{\omega_{n-j}}_H(f,U_0) $ for $j=\frac{n}{2}$, the result follows.
\end{proof}
\subsection{Rotation decomposition of hypergraph automorphisms} A rotation is an automorphism, but every automorphism is not a rotation. Thus, if a matrix $M_H$ is compatible with an automorphism $f$, which is not a rotation, we can not apply the \Cref{rotation-final}. Now, we explore the relation between an automorphism $f$ and the eigenvalues of any $f$-compatible matrix. 
A hypergraph $H$ with a rotation $f$ of order $n$ induces a new hypergraph $H_f$ such that $V(H_f)=\mathcal{O}_f$, the collection of all $f$-orbits in $H$ and $E(H_f)=\{\tilde e=\{O_f(v):v\in e\}: e\in E(H)\}$, where $\tilde e=\{O_f(v):v\in e\}$ for all $e\in H$.
Since $[M_H/\mathcal{O}_f]$ is a matrix whose row and column are indexed by the vertex set of the hypergraph $H_f$. Consequently, if $g$ is a rotation in $H_f$, and $[M_H/\mathcal{O}_f]$ is a $g$-compatible matrix then we can use \Cref{inheritrotation} again to find the eigenvalues of $[M_H/\mathcal{O}_f]$.
\begin{df}
    Let $H$ be a hypergraph and $f$ be a rotation of order $n$ in $H$. If $U_i$ is the $i$-th component of the rotation $f$, then we refer to $D_f=\bigcup\limits_{i=0}^{n-1}U_i$ as the \emph{active domain} of the rotation $f$.
    Two hypergraph rotations $f_1$, and $f_2$ of order $n_1$, and $n_2$, respectively, in a hypergraph $H$ are called mutually \emph{disjoint} if the two active domains of $f_1$, and $f_2$ are disjoint, that is, $D_{f_1}\cap D_{f_2}=\emptyset$.
\end{df}
 Given any automorphism $f$ of a hypergraph $H$, $f$ is a permutation of $V(H)$. Thus, $f$ can be expressed as a product of disjoint cycles. The next result shows that each automorphism $f$ can be expressed as the product of disjoint rotations.
 \begin{thm}\label{aut-decomp-rot}
     Let $H$ be a hypergraph. Any automorphism of $H$ can be expressed as the product of disjoint rotations.
 \end{thm}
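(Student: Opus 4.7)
The plan is to read off the rotations directly from the cycle decomposition of $f$ viewed as a permutation of $V(H)$. Since $f$ is a bijection, it factors as a product of disjoint cycles. For each integer $n\ge 2$ occurring as a cycle length of $f$, let $A_n\subseteq V(H)$ be the union of the vertex sets of all $n$-cycles, and define $f_n:V(H)\to V(H)$ by $f_n(v)=f(v)$ for $v\in A_n$ and $f_n(v)=v$ otherwise. The permutations $f_n$ act on pairwise disjoint subsets of $V(H)$, hence commute, and their product equals $f$.

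Assuming each $f_n$ is a hypergraph automorphism, I would then verify that $f_n$ is a rotation of order $n$ as follows. Choose $U_0$ containing exactly one vertex from each $n$-cycle of $f$ and set $U_i=f^i(U_0)$ for $i=1,\ldots,n-1$; together with the invariant set $X_n=V(H)\setminus A_n$, this produces the required partition $V(H)=U_0\cup U_1\cup\cdots\cup U_{n-1}\cup X_n$ with $f_n(U_i)=U_{(i+1)\bmod n}$ and $f_n$ fixing $X_n$ pointwise. The active domains $A_n$ for distinct $n$ are disjoint by construction, so the rotations $f_n$ are mutually disjoint in the sense of the paper.

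The main obstacle is the hypothesis used in the previous paragraph: one must actually show that each $f_n$ lies in $Aut(H)$. Because $f\in Aut(H)$, every integer power $f^k$ is automatically in $Aut(H)$, so the cleanest strategy is to express $f_n$ as a power of $f$. When the distinct cycle lengths $\ell_1,\ldots,\ell_k$ of $f$ are pairwise coprime, the Chinese Remainder Theorem furnishes, for each $i$, an integer $m_i$ with $m_i\equiv 1\pmod{\ell_i}$ and $m_i\equiv 0\pmod{\ell_j}$ for $j\ne i$, so that $f_{\ell_i}=f^{m_i}\in Aut(H)$. For general cycle lengths I would proceed by induction on the number of distinct cycle lengths, peeling off one class of cycles at each step via an appropriately chosen power of $f$ and verifying directly that the factored-off piece still preserves $E(H)$ using the hyperedge-preservation of $f$. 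Once the automorphism property of each $f_n$ is established, the decomposition $f=\prod_n f_n$ into mutually disjoint rotations is immediate from the first two paragraphs.
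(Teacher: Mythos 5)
You take the same route as the paper: decompose $f$ into disjoint cycles, group the cycles by common length, let $f_n$ be the product of all the $n$-cycles, and build the components $U_0,\ldots,U_{n-1}$ by choosing one representative per cycle. The one thing you add is that you explicitly flag the step on which everything hinges: a rotation is by definition a hypergraph \emph{automorphism}, so one must prove that each $f_n$ preserves $E(H)$. The paper's proof simply declares each $f_i$ to be a rotation and never addresses this. Your CRT argument settles it when the distinct cycle lengths are pairwise coprime, since then each $f_{\ell_i}$ is a power of $f$ and hence lies in $Aut(H)$; that part is correct and complete.

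The problem is that in the general case the step you flag is not merely unproved but false, so neither your proposed induction nor the paper's assertion can be repaired as stated. Take $V(H)=\{a,a',b_0,b_1,b_2,b_3\}$, $E(H)=\bigl\{\{a,b_0\},\{a',b_1\},\{a,b_2\},\{a',b_3\}\bigr\}$ and $f=(a\,a')(b_0\,b_1\,b_2\,b_3)$. Then $\hat f$ permutes $E(H)$ cyclically, so $f\in Aut(H)$, but the length-$2$ factor $(a\,a')$ maps $\{a,b_0\}$ to $\{a',b_0\}\notin E(H)$ and the length-$4$ factor maps $\{a,b_0\}$ to $\{a,b_1\}\notin E(H)$; neither grouped factor is an automorphism, and neither is a power of $f$ (that would require $m\equiv1\pmod 2$ together with $m\equiv0\pmod 4$). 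Here the theorem survives only because $\gcd(2,4)=2$ lets one regard $f$ itself as a single rotation of order $2$. Worse, for cycle lengths such as $6,10,15$, which are pairwise non-coprime but have $\gcd(6,10,15)=1$, a congruence analysis shows that if $Aut(H)=\langle f\rangle$ (which one can arrange by taking $E(H)$ to be a suitable union of $f$-orbits of hyperedges), then the only factorization of $f$ into disjoint automorphisms is the trivial one, and $f$ is not itself a rotation; so the theorem needs an extra hypothesis on the cycle lengths. In short, you have correctly located the gap --- the same gap the paper's proof passes over in silence --- but the argument you sketch for closing it in general would fail.
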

 \begin{proof}
     Let $f$ be an automorphism of the hypergraph $H$. Since each permutation can be written as a product of disjoint cycles,
     $f=(f_{11}\circ f_{12}\circ\ldots\circ f_{1n_1})\circ\ldots\circ(f_{k1}\circ f_{k2}\circ\ldots\circ f_{kn_k})$, where for each $i=1,\ldots,k$, the length of the cycle $f_{ij}$ is $l_i$ for all $j=1,2,\ldots,n_i$, and $l_{i_1}\ne l_{i_2}$ for $i_1\ne i_2$. 
     Without loss of generality, we assume $l_{ki}=1$ for all $i=1,2,\ldots,n_k$. If no cycle of length $1$ exists in $f$, then $n_k=0$. Since the cycles are disjoint, the vertex set $V(H)$ can be written as the following disjoint partition
      $V(H)=\bigcup\limits_{i=1}^k\left(\bigcup\limits_{j=1}^{n_i}U_{ij}\right)$ such that $f_{ij}(v)=v$ for all $V(H)\setminus U_{ij}$ for all $i=1,\ldots,k$, and $j=1,\ldots,n_k$.
      For each $i=1,\ldots,k$, consider the map $f_i=f_{i1}\circ f_{i2}\circ\ldots\circ f_{in_i}$.
      Pick one element $u_j$ from each $U_{ij}$ for all $j=1,\ldots,n_i$, and assume $U_0^{(i)}=\{u_1,u_2,\ldots,u_{n_i}\}$. Here $f_i:V(H)\to V(H)$ is a rotation of order $l_i$ with the $0$-th component $U_0^{(i)}$. The active domain of $f_i$ is $\bigcup\limits_{j=1}^{n_i}U_{ij}$. Consequently, the invariant set of $f_i$ is $V(H)\setminus \bigcup\limits_{j=1}^{n_i}U_{ij}$. Since for two distinct $i_1,i_2\in\{1,\ldots,k\}$, the two active domains $\bigcup\limits_{j=1}^{n_{i_1}}U_{i_1j}$, and $\bigcup\limits_{j=1}^{n_{i_2}}U_{i_2j}$ are disjoint, two rotations $f_{i_1}$, and $f_{i_2}$ are disjoint. Thus, $f=f_{1}\circ f_{2}\circ\ldots\circ f_{k}$ is product of disjoint rotations.
 \end{proof}
For any automorphism $f$ in the group $Aut(H)$, it can be expressed as a composition of disjoint rotations $f_1, f_2, \ldots, f_k$, for some $1\le k\le|V(H)|$, each having orders $l_1, l_2, \ldots, l_k$ respectively. We refer $f=f_{1}\circ f_{2}\circ\ldots\circ f_{k}$ to as the \emph{rotation decomposition} of the automorphism $f$. Let $\omega_{ij} = e^{\iota{2j\pi}/{l_i}}$ for $j=0,1,\ldots,l_i-1$ and $i=1,2,\ldots,k$. In other words, $\omega_{i0}(=1), \omega_{i1}, \ldots, \omega_{il_{i-1}}$ represent the roots of $x^{l_i}-1=0$ for all $i=1,\ldots,k$. Given the disjoint nature of $f_1, \ldots, f_k$, the vertex set $V(H)$ can be partitioned into disjoint subsets: $V(H) = V_1 \cup \ldots \cup V_k$, where $V_i$ is the active domain of rotation $f_i$ for all $i=1,\ldots,k$. If $l_i$ is the order of rotation $f_i$, then $V_i = U_0^{(i)} \cup \ldots \cup U_{l_i-1}^{(i)}$, where $U_j^{(i)}$ is the $j$-th component of $f_i$ for $j=0,1,\ldots,l_1-1$. If $|U_0^{(i)}|=m_i$, then $|V_i|=l_i m_i$, and $|V(H)|=\sum_{i=1}^k l_i m_i$. Utilizing the result in \Cref{inheritrotation}, it follows that each eigenvalue of the $m_i \times m_i$ matrix $M_H^{\omega_{ij}}(f_i,U_0^{(i)})$ is an eigenvalue of $M_H$ for all $j=1,\ldots,l_i-1$, $i=1,\ldots,k$. In total, these rotation matrices contribute $\sum_{i=1}^k(l_i-1)m_i$ eigenvalues to $M_H$. For the remaining eigenvalues, we employ \Cref{lem-orbit}. The complete list of eigenvalues is encapsulated in the following result.

\begin{thm}\label{aut-comp}
    Let $H$ be a hypergraph and $f\in Aut(H)$. If $f=f_{1}\circ f_{2}\circ\ldots\circ f_{k}$ is a rotation-decomposition of $f$ where $f_i$ is a rotation of order $l_i$ with its $0$-th component $U_0^{(i)}$ for all $i=1,\ldots,k$, then for any $f$-compatible matrix $M_H$, the complete list of eigenvalues are the
    eigenvalues of the matrix $M_H^{\omega_{ij}}(f_i, U_0^{(i)})$ for all $j=1,\ldots,l_i-1$, $i=1,\ldots,k$, and the eigenvalues of the quotient matrix $[M_H/\mathcal{O}_f]$.
\end{thm}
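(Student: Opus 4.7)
The plan is to follow the template of Theorem \ref{rotation-final} and build an eigenbasis of $M_H$ by assembling eigenvectors coming from the $k$ component rotations together with those coming from the $f$-orbit quotient. First I would write $V(H) = V_1 \cup \cdots \cup V_k \cup X$, where $V_i$ is the active domain of $f_i$ (so $|V_i| = l_i m_i$ with $m_i = |U_0^{(i)}|$) and $X$ is the common invariant set. Because the rotations $f_1, \ldots, f_k$ are pairwise disjoint, on each $V_i$ the action of $f$ coincides with the action of $f_i$, and consequently the $f$-orbits inside $V_i$ are exactly the $f_i$-orbits there, while the $f$-orbits inside $X$ are the singleton orbits.

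For each pair $(i,j)$ with $i = 1,\ldots,k$ and $j = 1,\ldots,l_i - 1$, I would apply Theorem \ref{inheritrotation} relative to the rotation $f_i$: every eigenvector $x^{(i,j,r)} : U_0^{(i)} \to \mathbb{C}$ of the $m_i \times m_i$ matrix $M_H^{\omega_{ij}}(f_i, U_0^{(i)})$ lifts to the vector $(x^{(i,j,r)})_{\omega_{ij}} : V(H) \to \mathbb{C}$ supported on $V_i$, and this lifted vector is an eigenvector of $M_H$ for the same eigenvalue. Call the resulting family $\mathcal{Y}_i$ and set $\mathcal{Y} = \bigcup_{i=1}^{k} \mathcal{Y}_i$. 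Separately, Proposition \ref{aut-equit} tells us that $\mathcal{O}_f$ is equitable for $M_H$, so Proposition \ref{lem-orbit} guarantees that every eigenvector $z^{(s)}$ of $[M_H/\mathcal{O}_f]$ lifts to an eigenvector $z^{(s)}_{\mathcal{O}_f}$ of $M_H$; let $\mathcal{Z}$ denote this family.

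The remaining task is to show that $\mathcal{Y} \cup \mathcal{Z}$ is linearly independent and has cardinality equal to $|V(H)|$. Within a single block $\mathcal{Y}_i$, independence is exactly what the proof of Theorem \ref{rotation-final} established. For two distinct blocks $i \ne i'$, the supports $V_i$ and $V_{i'}$ are disjoint, so independence across blocks is automatic. For cross-independence with $\mathcal{Z}$, the key observation is that each $z \in \mathcal{Z}$ is constant on each $f$-orbit, while every $y \in \mathcal{Y}_i$ sums to zero on each $f$-orbit contained in $V_i$ (via $1 + \omega_{ij} + \cdots + \omega_{ij}^{l_i - 1} = 0$) and vanishes identically outside $V_i$; hence $\langle y, z \rangle = 0$ in the standard inner product. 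Counting,
\[ |\mathcal{Y}| + |\mathcal{Z}| = \sum_{i=1}^{k} (l_i - 1) m_i + \Bigl(\sum_{i=1}^{k} m_i + |X|\Bigr) = \sum_{i=1}^{k} l_i m_i + |X| = |V(H)|, \]
so linear independence forces these vectors to constitute a complete eigenbasis of $M_H$, and the listed eigenvalues exhaust its spectrum.

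The step I expect to be the main obstacle is the legitimate invocation of Theorem \ref{inheritrotation} with respect to each $f_i$, since that theorem was established for matrices compatible with a single rotation, whereas here $M_H$ is only assumed $f$-compatible. On pairs of vertices both lying in $V_i$ the actions of $f$ and $f_i$ coincide, so the internal computation in the proof of \ref{inheritrotation} transfers without change. The more delicate step is showing that $(M_H x_{\omega_{ij}}^{(i)})(u)$ vanishes for $u$ in a different block $V_{i'}$: here one must combine iterated $f$-compatibility with the disjointness of the active domains in order to reduce the weighted sum of matrix entries to a geometric sum of $\omega_{ij}^t$ that collapses to zero.
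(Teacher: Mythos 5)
Your proposal follows the same route as the paper's own proof of \Cref{aut-comp}: lift eigenvectors of each $M_H^{\omega_{ij}}(f_i,U_0^{(i)})$ via \Cref{inheritrotation}, lift eigenvectors of $[M_H/\mathcal{O}_f]$ via \Cref{lem-orbit}, count, and deduce completeness from linear independence (disjoint supports across blocks, and $1+\omega+\cdots+\omega^{l_i-1}=0$ against $\mathcal{Z}$). But the step you flag at the end as ``delicate'' is precisely where the argument breaks, and your proposed repair does not go through. \Cref{inheritrotation} is proved for a matrix that is $f_i$-compatible, whereas $M_H$ is only $f$-compatible; for $u$ in a different active domain $V_{i'}$, iterated $f$-compatibility gives
\[(M_Hx_{\omega_{ij}})(u)=\sum_{v\in U_0^{(i)}}\Bigl(\sum_{t=0}^{l_i-1}\omega_{ij}^{\,t}\,m_{f_{i'}^{-t}(u)\,v}\Bigr)x(v),\]
and the coefficient $a_t=m_{f_{i'}^{-t}(u)v}$ is periodic in $t$ with period $l_{i'}$ as well as $l_i$, hence depends only on $t\bmod d$ where $d=\gcd(l_i,l_{i'})$. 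The inner sum therefore factors through $\sum_{q}(\omega_{ij}^{d})^{q}$, which vanishes only when $\omega_{ij}^{d}\neq1$. If $d>1$ and $j$ is a multiple of $l_i/d$, nothing collapses and the lifted vector need not be an eigenvector. Your sentence ``reduce \dots to a geometric sum of $\omega_{ij}^t$ that collapses to zero'' is exactly the false step; the paper's proof has the same gap, only less visibly, since it invokes \Cref{inheritrotation} without checking its hypothesis.

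This is not merely a gap in the write-up: the statement fails without a coprimality hypothesis on $l_1,\dots,l_k$. Take $V(H)=\{1,\dots,6\}$, $E(H)=\bigl\{\{1,3\},\{1,5\},\{2,4\},\{2,6\}\bigr\}$, and $f=(1\,2)(3\,4\,5\,6)\in Aut(H)$, with $M_H=A^{(r)}_H$ (which is $f$-compatible). The rotation decomposition has $f_1=(1\,2)$, $f_2=(3\,4\,5\,6)$, $l_1=2$, $l_2=4$, $U_0^{(1)}=\{1\}$, $U_0^{(2)}=\{3\}$. All four $1\times1$ rotation matrices equal $[0]$, and $[M_H/\mathcal{O}_f]=\left[\begin{smallmatrix}0&2\\1&0\end{smallmatrix}\right]$ has eigenvalues $\pm\sqrt2$, so the theorem would predict the spectrum $\{0,0,0,0,\sqrt2,-\sqrt2\}$; but $H$ is a disjoint union of two paths on three vertices, whose spectrum is $\{\sqrt2,\sqrt2,0,0,-\sqrt2,-\sqrt2\}$. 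The obstruction disappears exactly when the $l_i$ are pairwise coprime (in particular in the ``separable'' case of distinct primes treated in \cite{benjamin2}), so both your argument and the paper's require that additional hypothesis, or else the finer decomposition of \cite{Benjamin-webb-gen-equitable-2019} that handles shared prime factors.
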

\begin{proof}Being an $m_i\times m_i$ matrix, $M_H^{\omega_{ij}}(f_i,U_0^{(i)})$ has $m_i$ numbers of eigenvalues, where $m_i$ is the cardinality of the $0$-th component of $U_0^{(i)}$. Thus by the \Cref{inheritrotation}, for all $j=1,\ldots,l_i-1$, $i=1,\ldots,k$, we have total $\sum_{i=1}^k(l_i-1)m_i$ eigenvalues of $M_H$, where $l_i$ is the order of $f_i$. By the \Cref{lem-orbit}, $[M_H/\mathcal{O}_f]$ provides $\sum_{i=1}^km_i$ many eigenvalues of $M_H$. Since $\sum_{i=1}^k(l_i-1)m_i+\sum_{i=1}^km_i=|V(H)|$, it is enough to show the corresponding eigenvectors are linearly independent.
    Suppose that $V_i$ is the active domain of the rotation $f_i$ for all $i=1,\ldots,k$. For two distinct $p,q\in\{1,\ldots,k\}$, since $V_p\cap V_q=\emptyset$, if $x:U_0^{(p)}\to\mathbb{C}$, and $y:U_0^{(q)}\to\mathbb{C}$ are eigenvectors of $M_H^{\omega_{pj}}(f_p,U_0^{(p)})$ and $M_H^{\omega_{qj}}(f_q,U_0^{(q)})$, respectively, then the inner product $\langle x_{\omega_p},y_{\omega_q}\rangle=0 $. 
    For any eigenvector $z$ of $[M_H/\mathcal{O}_f]$, if $x$ is an eigenvector of $M_H^{\omega_{ij}}(f_i,U_0^{(q)})$ for any $j=1,\ldots,l_i-1$, $i=1,\ldots,k$, then $\langle x_{\omega_i},z_{\mathcal{O}_f}\rangle=0$. This completes the proof.
\end{proof}
 The \Cref{aut-comp} provides a decomposition of an automorphism-compatible matrix into smaller matrices such that the union of the eigenvalues of the smaller matrices is the complete list of eigenvalues of the automorphism-compatible matrix. In the proof of \Cref{aut-decomp-rot}, we club all the cycles of length $l_i$ to construct the rotation $f_i$ of order $l_i$. Thus, in the rotation decomposition, the rotations have distinct orders. Being disjoint, these rotations $f_1,f_2,\ldots,f_k$ commutes, and hence $f^m=f_1^m\circ\ldots\circ f_k^m$. If $l$ is the least common multiple of $l_1,\ldots,l_k$, then $l$ is the order of $f$. If $l_1,\ldots,l_k$ are all primes numbers, then the order of $f$ is a product of distinct primes. In \cite{benjamin2}, a graph automorphism with its order equal to a product of distinct primes is referred to as \emph{separable automorphism}. This paper uses basic automorphisms to provide the eigenvalues and eigenvectors of a matrix compatible with separable graph automorphisms from the smaller matrices. Later in \cite{Benjamin-webb-gen-equitable-2019}, uniform, basic, and separable graph automorphisms are used to deal with any general graph automorphism and a method is provided to find the eigenvalues of a matrix compatible with any graph automorphisms using smaller matrices associated with some basic automorphisms.

For any $n$-th root of unity $\omega$, recall the definition of the rotation matrix $M^{\omega}_H(f,U_0)=\left(r_{uv}\right)_{u,v\in U_0}$, indexed by $U_0$, where
		$r_{uv}=\sum\limits_{i=0}^{n-1}\omega^im_{uf^i(v)}
		$. Since $|r_{uv}|\le \sum\limits_{i=0}^{n-1}|\omega^i||m_{uf^i(v)}|=\sum\limits_{i=0}^{n-1}m_{uf^i(v)}$,
  therefore, the $(u,v)$-th entry of $M^{\omega}_H(f,U_0)$ can be at most the $(u,v)$-th entry of $M^{1}_H(f,U_0)$. Now, we show that the \emph{spectral radius} of $M_H$, that is, $\rho(M_H)=\max\{|\lambda|:\lambda \text{~is an eigenvalue of $M_H$}\}$ is the spectral radius of $[M_H/\mathcal{O}_f]$.

  \begin{thm}
      Let $H$ be a hypergraph, and $f\in Aut(H)$. If $M_H$ is an $f$-compatible matrix, then $\rho(M_H)=\rho([M_H/\mathcal{O}_f])$.
  \end{thm}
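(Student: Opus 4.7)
The plan is to prove the two inequalities $\rho(M_H)\ge \rho([M_H/\mathcal{O}_f])$ and $\rho(M_H)\le \rho([M_H/\mathcal{O}_f])$ separately. The $\ge$ direction is immediate from \Cref{lem-orbit}, which says every eigenvalue of $[M_H/\mathcal{O}_f]$ lifts to an eigenvalue of $M_H$. The real content lies in the $\le$ direction, which uses the rotation decomposition of $f$ developed above together with the nonnegativity of the entries of $M_H$ that is implicit in the entrywise bound $|r_{uv}|\le \sum_i m_{uf^i(v)}$ recorded just before the theorem.

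For the hard direction, I would first invoke \Cref{aut-decomp-rot} to write $f=f_1\circ f_2\circ \cdots\circ f_k$ as a product of disjoint rotations, with $f_i$ of order $l_i$ and $0$-th component $U_0^{(i)}$. By \Cref{aut-comp}, the complete spectrum of $M_H$ is the union of the spectrum of $[M_H/\mathcal{O}_f]$ and the spectra of the rotation matrices $M_H^{\omega_{ij}}(f_i,U_0^{(i)})$ with $\omega_{ij}\ne 1$. It therefore suffices to dominate the spectral radius of each such rotation matrix by $\rho([M_H/\mathcal{O}_f])$, and I would carry this out in two short steps.

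Step one: the entrywise inequality $|M_H^{\omega_{ij}}(f_i,U_0^{(i)})|\le M_H^{1}(f_i,U_0^{(i)})$ (entrywise, in view of the displayed bound above the theorem) combined with Wielandt's theorem for nonnegative matrices gives $\rho(M_H^{\omega_{ij}}(f_i,U_0^{(i)}))\le \rho(M_H^{1}(f_i,U_0^{(i)}))$. Step two: since $f_1,\ldots,f_k$ are disjoint, every $f$-orbit of a vertex in the active domain of $f_i$ coincides with its $f_i$-orbit, and a direct entry-matching of $[M_H/\mathcal{O}_f]_{O_f(u),O_f(v)}=\sum_{w\in O_f(v)} m_{uw}$ for $u,v\in U_0^{(i)}$ shows that $M_H^{1}(f_i,U_0^{(i)})$ is precisely the principal submatrix of the nonnegative matrix $[M_H/\mathcal{O}_f]$ indexed by the non-singleton $f_i$-orbits. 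A standard Perron--Frobenius/Collatz--Wielandt argument (extending the Perron vector of the submatrix by zero to a test vector for the full nonnegative quotient matrix) then yields $\rho(M_H^{1}(f_i,U_0^{(i)}))\le \rho([M_H/\mathcal{O}_f])$. Chaining the two inequalities across all rotation matrices completes the proof.

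The main obstacle is bookkeeping rather than insight: one must verify that $[M_H/\mathcal{O}_f]$ really contains each $M_H^{1}(f_i,U_0^{(i)})$ as a principal submatrix, which hinges on the disjointness of the rotations $f_1,\ldots,f_k$ ensuring that the $f$-orbits of vertices in $V_i$ coincide with their $f_i$-orbits. Modulo that identification, the argument is a standard combination of Wielandt's theorem and the monotonicity of spectral radii under passage to principal submatrices for nonnegative matrices.
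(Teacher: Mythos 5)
Your proposal is correct and follows essentially the same route as the paper: decompose $f$ into disjoint rotations, identify each $M^{1}_H(f_i,U_0^{(i)})$ as a principal submatrix of $[M_H/\mathcal{O}_f]$, dominate $M^{\omega_{ij}}_H(f_i,U_0^{(i)})$ entrywise by $M^{1}_H(f_i,U_0^{(i)})$, and apply the standard nonnegative-matrix monotonicity results (the paper cites \cite[Corollaries 8.1.19 and 8.1.20]{horn2012matrix} where you invoke Wielandt and Perron--Frobenius) before concluding via \Cref{aut-comp}. Your explicit remark that the argument tacitly requires $M_H$ to be entrywise nonnegative is a point the paper leaves implicit as well.
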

  \begin{proof}
      Let $f=f_{1}\circ f_{2}\circ\ldots\circ f_{k}$ be the {rotation decomposition} of the automorphism $f$. The collection of all the $f$-orbit, $\mathcal{O}_f$ can be partitioned into disjoint set as $\mathcal{O}_f=\mathcal{O}_1\cup\ldots\cup \mathcal{O}_k$, where  $\mathcal{O}_i=\{O_f(v):v\in V_i\}$, where $V_i$ is the active domain of the rotation $f_i$ for all $i=1,\ldots,k$. Since all the rotations in the rotation decomposition are disjoint, we have $V_i\cap V_j=\emptyset$ for two distinct $i,j=1,\ldots,k$. Thus, $\mathcal{O}_i\cap \mathcal{O}_j=\emptyset$. Consider the submatrix $[M_H/f_i]=\left(b_{pq}\right)_{p,q\in \mathcal{O}_i}$ of $[M_H/\mathcal{O}_f] =\left(b_{pq}\right)_{p,q\in\mathcal{O}_f}$. Here $\sum\limits_{w\in q}m_{uw}=b_{pq}$ for all $p,q\in \mathcal O_f $, and $u\in p$. For any $p\in \mathcal{O}_i$, the orbit $p=O_{f_i}(v)=\{v,f_i(v),f_i^2(v),\ldots,f_i^{l_i-1}(v)\}$ for some $v\in U^{(i)}_0$. Therefore, if $p,q\in \mathcal{O}_i$, then $p=O_{f_i}(u)$, and $q=O_{f_i}(v)$ for some $u,v\in U_0^{(i)}$. Thus, $b_{pq}=\sum\limits_{k=0}^{l_i-1}m_{uf_i^k(v)}=$ the $(u,v)$-th entry of
      $M^{1}_H(f_i,U_0^{(i)})$. Therefore, $[M_H/f_i]=M^{1}_H(f_i,U_0^{(i)})$. That is, $M^{1}_H(f_i,U_0^{(i)})$ is a submatrix of $M_H$.

      Let $\omega_{ij} = e^{\iota\frac{2j\pi}{l_i}}$ for $j=0,1,\ldots,l_i-1$ and $i=1,2,\ldots,k$, where $l_i$ is the order of the rotation $f_i$. Since the rotation matrix is $M^{\omega_{ij}}_H(f_i,U_0^{(i)})=\left(r_{uv}\right)_{u,v\in U_0}$, and  $|r_{uv}|\le \sum\limits_{i=0}^{n-1}|\omega^i||m_{uf^i(v)}|=\sum\limits_{i=0}^{n-1}m_{uf^i(v)}$, therefore, the 
$(u,v)$
-th entry of 
$M^{\omega_{ij}}_H(f_i,U_0^{(i)})$
 can be, at most, the 
$(u,v)$
-th entry of $M^{1}_H(f_i,U_0^{(i)})=[M_H/f_i]$, a submatrix of $[M_H/\mathcal{O}]_f$. Thus, by \cite[Corollary 8.1.19 and Corollary 8.1.20]{horn2012matrix}, we have $\rho(M^{\omega_{ij}}_H(f_i,U_0^{(i)}) )\le\rho([M_H/\mathcal{O}]_f)$. Thus, by the \Cref{aut-comp} $\rho(M_H)=\rho([M_H/\mathcal{O}_f])$.
  \end{proof}
 \section{Unit-based symmetries}
In addition to hypergraph automorphisms, certain symmetries inherent in hypergraphs can be explained using equivalence relations. In a hypergraph automorphism $f$, the $f$-orbits correspond to the equivalence classes of a designated equivalence relation. Notably, other equivalence relations defined on the vertex set $V(H)$ of a hypergraph $H$ also represent hypergraph symmetries. For instance, consider the following equivalence relation:
 $$\mathcal{R}_{{H}}=\{\{u,v\}\in V(H)\times V(H):E_u(H)=E_v(H)\}.$$
 We have introduced the notion of the \emph{unit} in a hypergraph in \cite{unit}. Any $\mathcal{R}_{{H}}$-equivalence class is called a unit. Each unit  corresponds to an $E_0\subseteq E(H)$ such that for each vertex $v$ belongs to the $\mathcal{R}_{{H}}$-equivalence class, $E_v(H)=E_0$. We refer to $E_0$ as the \emph{generating set} of the unit. We denote the unit with generating set $E_0$ as $W_{E_0}$. The units in hypergraph $H$ collectively form a partition of the vertex set $V(H)$, and this collection of units is denoted as $\mathfrak{U}(H)$, that is, $V(H)=\bigcup\limits_{W_E\in\mathfrak{U}(H)}W_E$.

	\begin{exm}[]\label{hyp-fig}
		Consider the hypergraph $H$ in \Cref{fig:unit}, with $V(H)=\{n\in \mathbb{N}:n\le 18\}$ and 
		$E(H)=\{e_1,e_2,e_3,e_4,e_5,e_6,e_7\}$, where  $e_1=\{1,2,3,4,9,10\}$, $e_2=\{1,2,3,4,7,8\}$, $e_3=\{1,2,3,4,5,6,15\}$, $e_4=\{1,2,11,12,16\}$, $e_5=\{1,2,13,14\}$, $e_6=\{11,12,16,17,18\}$, and $e_7=\{13,14,17,18\} $.
		The generating sets are $E_1=\{e_1,e_2,e_3,e_4,e_5\}$, $E_2=\{e_1,e_2,e_3\}$, $E_3=\{e_3\}$, $E_4=\{e_2\}$, $E_5=\{e_1\}$, $E_6=\{e_4,e_6\}$, $E_7=\{e_5,e_7\}$ and $E_8=\{e_6,e_7\}$.
		The Corresponding units of $H $ are $W_{E_1}=\{1,2\}$, $W_{E_2}=\{3,4\}$, $W_{E_3}=\{5,6,15\}$, $W_{E_4}=\{7,8\}$, $W_{E_5}=\{9,10\}$, $W_{E_6}=\{11,12,16\}$, $W_{E_7}=\{13,14\}$, and $W_{E_8}=\{17,18\}$.
		
	\end{exm}
	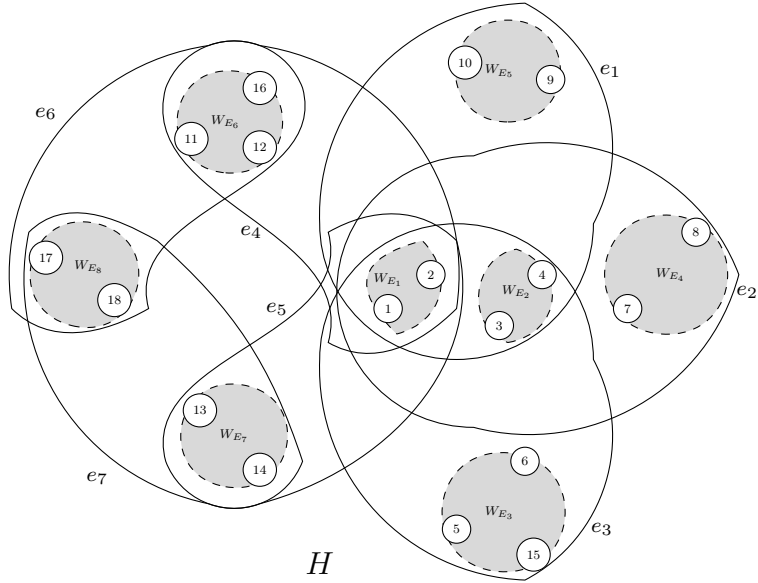
\begin{figure}[ht]
		\centering
		\begin{tikzpicture}[scale=0.45]

		\node [style=none] (0) at (2, 6.5) {};
		\node [style=none] (1) at (0, -4) {};
		\node [style=none] (2) at (4, 0) {};
		\node [style=none] (3) at (-4, 0) {};
		\node [style=none] (4) at (0, 0) {};
		\node [style=none] (5) at (2, -10.5) {};
		\node [style=none] (6) at (4, -4) {};
		\node [style=none] (7) at (-4, -4) {};
		\node [style=none] (8) at (0.5, 2) {};
		\node [style=none] (9) at (0.5, -6) {};
		\node [style=none] (10) at (8.25, -1.5) {};
		\node [style=none] (11) at (-3.5, -2) {};
		\node [style=none] (12) at (-5.75, 5.25) {};
		\node [style=none] (13) at (-3.75, -3.5) {};
		\node [style=none] (14) at (0, -2.5) {};
		\node [style=none] (15) at (-8.5, 4) {};
		\node [style=none] (16) at (-7.25, 5.25) {};
		\node [style=none] (17) at (-9, -2.5) {};
		\node [style=none] (18) at (-13, -2.5) {};
		\node [style=none] (19) at (-4.5, 4) {};
		\node [style=none] (20) at (-7.25, -8.25) {};
		\node [style=none] (21) at (-8.75, -0.5) {};
		\node [style=none] (22) at (-12.5, -0.25) {};
		\node [style=none] (23) at (-4.5, -7) {};
		\node [style=none] (24) at (-5.75, -8.25) {};
		\node [style=none] (25) at (-3.75, -0.25) {};
		\node [style=none] (26) at (0, -0.5) {};
		\node [style=none] (27) at (-8.5, -7) {};
		\node [style=none] (28) at (6.25, 0.25) {};
		\node [style=none] (29) at (6, -3.25) {};
		\node [style=none] (30) at (1.5, 6) {};
		\node [style=none] (31) at (1.5, 3) {};
		\node [style=none] (32) at (1.5, -6.75) {};
		\node [style=none] (33) at (1.25, -10.25) {};
		\node [style=none] (34) at (1.75, -0.75) {};
		\node [style=none] (35) at (1.5, -3.5) {};
		\node [style=none] (36) at (-1, -0.5) {};
		\node [style=none] (37) at (-1.75, -3.25) {};
		\node [style=none] (38) at (-6.5, 4.5) {};
		\node [style=none] (39) at (-6.75, 1.5) {};
		\node [style=none] (40) at (-6.25, -4.75) {};
		\node [style=none] (41) at (-6.75, -7.75) {};
		\node [style=none] (42) at (-10.5, 0) {};
		\node [style=none] (43) at (-11.25, -3) {};
	
		\draw (2.center)[]
			 to [bend left=45] (1.center)
			 to [bend left=45] (3.center)
			 to [bend left=45] (0.center)
			 to [bend left=45] cycle;
		\draw (6.center)
			 to [bend left=45] (5.center)
			 to [bend left=45] (7.center)
			 to [bend left=45] (4.center)
			 to [bend left=45] cycle;
		\draw (10.center)
			 to [bend left=45] (9.center)
			 to [bend left=45] (11.center)
			 to [bend left=45] (8.center)
			 to [bend left=45] cycle;
		\draw (14.center)
			 to [in=-30, out=225] (13.center)
			 to [in=255, out=75] (15.center)
			 to [bend left=45] (12.center)
			 to [bend left=45] cycle;
		\draw (19.center)
			 to [bend right=45] (16.center)
			 to [bend right=45] (18.center)
			 to [in=-150, out=-45] (17.center)
			 to [in=-75, out=105] cycle;
		\draw (23.center)
			 to [bend left=45] (20.center)
			 to [bend left=45] (22.center)
			 to [in=150, out=45] (21.center)
			 to [bend left=15] cycle;
		\draw (25.center)
			 to [in=-255, out=-75] (27.center)
			 to [bend right=45] (24.center)
			 to [in=285, out=15] (26.center)
			 to [in=30, out=-225] cycle;
		\draw [fill=gray!30!white,style=new edge style 1] (29.center)
			 to [bend right=90, looseness=1.75] (28.center)
			 to [bend left=270, looseness=1.75] cycle;
		\draw [fill=gray!30!white,style=new edge style 1] (31.center)
			 to [bend right=90, looseness=1.75] (30.center)
			 to [bend left=270, looseness=1.75] cycle;
		\draw [fill=gray!30!white,style=new edge style 1] (33.center)
			 to [bend right=90, looseness=1.75] (32.center)
			 to [bend left=270, looseness=1.75] cycle;
		\draw [fill=gray!30!white,style=new edge style 1] (35.center)
			 to [bend right=75, looseness=1.50] (34.center)
			 to [bend right=75, looseness=1.25] cycle;
		\draw [fill=gray!30!white,style=new edge style 1] (37.center)
			 to [bend right=60, looseness=1.25] (36.center)
			 to [bend right=60, looseness=1.75] cycle;
		\draw [fill=gray!30!white,style=new edge style 1] (39.center)
			 to [bend right=90, looseness=1.75] (38.center)
			 to [bend left=270, looseness=1.75] cycle;
		\draw [fill=gray!30!white,style=new edge style 1] (41.center)
			 to [bend right=90, looseness=1.75] (40.center)
			 to [bend left=270, looseness=1.75] cycle;
		\draw [fill=gray!30!white,style=new edge style 1] (43.center)
			 to [bend right=90, looseness=1.75] (42.center)
			 to [bend left=270, looseness=1.75] cycle;

 	\node [style=new style 0,scale=0.6] (44) at (-2, -2.5) {\tiny 1};
		\node [style=new style 0,scale=0.6] (45) at (-0.75, -1.5) {\tiny 2};
		\node [style=new style 0,scale=0.6] (46) at (1.25, -3) {\tiny 3};
		\node [style=new style 0,scale=0.6] (47) at (2.5, -1.5) {\tiny 4};
		\node [style=new style 0,scale=0.6] (48) at (0, -9) {\tiny 5};
		\node [style=new style 0,scale=0.6] (49) at (2, -7) {\tiny 6};
		\node [style=new style 0,scale=0.6] (50) at (2.25, -9.75) {\tiny 15};
		\node [style=new style 0,scale=0.6] (51) at (5, -2.5) {\tiny 7};
		\node [style=new style 0,scale=0.6] (52) at (7, -0.25) {\tiny 8};
		\node [style=new style 0,scale=0.6] (53) at (0.25, 4.75) {\tiny 10};
		\node [style=new style 0,scale=0.6] (54) at (2.75, 4.25) {\tiny 9};
		\node [style=new style 0,scale=0.6] (55) at (-5.75, 2.25) {\tiny 12};
		\node [style=new style 0,scale=0.6] (56) at (-7.75, 2.5) {\tiny 11};
		\node [style=new style 0,scale=0.6] (57) at (-5.75, 4) {\tiny 16};
		\node [style=new style 0,scale=0.6] (58) at (-7.5, -5.5) {\tiny 13};
		\node [style=new style 0,scale=0.6] (59) at (-5.75, -7.25) {\tiny 14};
		\node [style=new style 0,scale=0.6] (60) at (-12, -1) {\tiny 17};
		\node [style=new style 0,scale=0.6] (61) at (-10, -2.25) {\tiny 18};
		\node [style=none] (62) at (4.5, 4.5) {\tiny $e_1$};
		\node [style=none] (63) at (8.5, -2) {\tiny $e_2$};
		\node [style=none] (64) at (4.25, -9) {\tiny $e_3$};
		\node [style=none] (65) at (-5.25, -2.5) {\tiny $e_5$};
		\node [style=none] (66) at (-6, -0.25) {\tiny $e_4$};
		\node [style=none] (67) at (-10.5, -7.5) {\tiny $e_7$};
		\node [style=none] (68) at (-12, 3.25) {\tiny $e_6$};
		\node [style=none,scale=0.6] (69) at (6.25, -1.5) {\tiny $W_{E_4}$};
		\node [style=none,scale=0.6] (70) at (1.25, 4.5) {\tiny $W_{E_5}$};
		\node [style=none,scale=0.6] (71) at (1.25, -8.5) {\tiny $W_{E_3}$};
		\node [style=none,scale=0.6] (72) at (1.75, -2) {\tiny $W_{E_2}$};
		\node [style=none,scale=0.6] (73) at (-2, -1.65) {\tiny $W_{E_1}$};
		\node [style=none,scale=0.6] (74) at (-6.5, -6.25) {\tiny $W_{E_7}$};
		\node [style=none,scale=0.6] (75) at (-6.75, 3) {\tiny $W_{E_6}$};
		\node [style=none,scale=0.6] (76) at (-10.75, -1.25) {\tiny $W_{E_8}$};
		\node [style=none] (77) at (-4, -10) {$H$};
	
\end{tikzpicture}

			\caption{Hypergraph with its unit}
			\label{fig:unit}
		\end{figure}
		
		\subsection{Unit-compatible Matrices}Unit-compatible matrices are the matrices associated with a hypergraph that encodes the symmetry of units in the hypergraph.	
		\begin{df}[Unit-compatible Matrices] Let $H$ be a hypergraph. 
			A matrix $M_H=\left(m_{uv}\right)_{u,v\in V(H)}$ is called \emph{unit-compatible matrices} if for all $W_E\in\mathfrak{U}(H)$, and any pair of vertices  $u,v\in W_E$,
				 $m_{uw}=m_{vw}$, $m_{wu}=m_{wv}$ for all $w\in V(H)\setminus\{u,v\}$,
			 $m_{uu}=m_{vv}=d_{M_H}(W_E)$, and $m_{uv}=m_{vu}$.
		\end{df}
		Thus, if $M_H$ is a unit-compatible matrix associated with a hypergraph $H$, and $W_E\in\mathfrak{U}(H)$ then $m_{uu}=d_{M_H}(W_E)$, a constant for all $u\in W_E$. Moreover, $m_{uw}=s^w_{W_E}(M_H)$ for all $w\in V(H)\setminus W_E$. For $u,v,u^\prime,v^\prime\in W_E$, we have
		$m_{u^\prime v^\prime}=m_{u^\prime v}=m_{ vu^\prime}=m_{vu}=m_{uv}.$
		Therefore, each $W_E\in\mathfrak{U}(H)$ corresponds to a constant $r_{M_H}(W_E)$, such that, $m_{uv}=r_{M_H}(W_E)$, for all $u,v\in W_E$. Before going into the next result, note that for any $U\subseteq V(H)$, the characteristic function of $U$, $\chi_{U}:V(H)\to\{0,1\}$ is such that $\chi_U(v)=1$ if $v\in U$, and otherwise $\chi_U(v)=0$.

  	\begin{prop}\label{lem-unit}
			Let $H$ be a hypergraph. For any $W_{E}\in\mathfrak{U}(H)$ with $|W_E|>1$, any unit-compatible matrix $M_H=\left(m_{uv}\right)_{u,v\in V(H)}$ has an eigenvalue $d_{M_H}(W_E)-r_{M_H}(W_E)$ with multiplicity at least $|W_{E}|-1$.
		\end{prop}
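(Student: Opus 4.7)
The plan is to exhibit a $(|W_E|-1)$-dimensional subspace of $\mathbb{C}^{V(H)}$ on which $M_H$ acts as the scalar $d_{M_H}(W_E) - r_{M_H}(W_E)$. The idea is that every vertex in a fixed unit $W_E$ gives rise to an identical row of $M_H$ outside of $W_E$ (and a highly symmetric pattern inside $W_E$), so any vector supported on $W_E$ and summing to zero will automatically lie in the kernel of the external block while simultaneously being an eigenvector of the internal block.

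Concretely, I would begin by unpacking the unit-compatibility hypothesis to record the explicit shape of $M_H$ restricted to rows/columns indexed by $W_E$. For $u,v \in W_E$ distinct, $m_{uv}=r_{M_H}(W_E)$ and $m_{uu}=d_{M_H}(W_E)$, so the principal submatrix on $W_E \times W_E$ equals $(d_{M_H}(W_E)-r_{M_H}(W_E))\,I + r_{M_H}(W_E)\, J$, where $J$ is the all-ones matrix. Moreover, for any fixed $w \notin W_E$, the entries $m_{wv}$ (and $m_{vw}$) are independent of the choice of $v \in W_E$; call the common value $s^w_{W_E}(M_H)$.

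Next, define
\[
S \;=\; \bigl\{\, x : V(H) \to \mathbb{C} \;\big|\; x(w)=0 \text{ for all } w \notin W_E,\ \sum_{v \in W_E} x(v) = 0 \,\bigr\}.
\]
This is the intersection of a $|W_E|$-dimensional coordinate subspace with a single linear constraint, so $\dim S = |W_E|-1$. For $x \in S$ I would verify $M_H x = (d_{M_H}(W_E)-r_{M_H}(W_E))\, x$ by checking two cases coordinatewise. If $u \in W_E$, then
\[
(M_H x)(u) = d_{M_H}(W_E)\, x(u) + r_{M_H}(W_E) \sum_{v \in W_E,\, v\neq u} x(v) = \bigl(d_{M_H}(W_E)-r_{M_H}(W_E)\bigr)\, x(u),
\]
using $\sum_{v\in W_E} x(v)=0$. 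If $u \notin W_E$, then $x(u)=0$ and
\[
(M_H x)(u) = \sum_{v\in W_E} s^u_{W_E}(M_H)\, x(v) = s^u_{W_E}(M_H) \cdot 0 = 0,
\]
which also equals $(d_{M_H}(W_E)-r_{M_H}(W_E))\, x(u)$. Hence $S \subseteq V_{d_{M_H}(W_E)-r_{M_H}(W_E)}(M_H)$, giving the claimed multiplicity bound $|W_E|-1$.

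The step I expect to require the most care is the bookkeeping for the $u \notin W_E$ case: one must use the fact that unit-compatibility forces $m_{uv}$ to be constant in $v \in W_E$ (and not merely that two chosen vertices of $W_E$ produce identical rows elsewhere), so that the sum collapses to $s^u_{W_E}(M_H) \sum_{v\in W_E} x(v)$. Everything else is direct linear algebra, and no results beyond the definition of unit-compatibility are needed.
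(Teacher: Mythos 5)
Your proof is correct and takes essentially the same route as the paper: the paper verifies the eigenvalue equation on the spanning vectors $\chi_{\{v_i\}}-\chi_{\{v_0\}}$, which generate exactly your subspace $S$ (the paper itself later names this subspace $S_{W_E}$). Your coordinatewise check, including the key observation that $m_{uv}$ is constant in $v\in W_E$ for $u\notin W_E$, matches the paper's computation.
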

		\begin{proof}
			Let $ W_{E}=\{v_0,v_1,\ldots,v_n\}$, and $y_i=\chi_{\{v_i\}}-\chi_{\{v_0\}}$ for all $i=1,2,\ldots,n$. For any $v\in V(H)$ we have,
			$M_H(y_j)(u)=\sum\limits_{v\in V(H)}m_{uv}y_j(v).$  If $u\notin\{v_0,v_i\}$ then $M_H(y_j)(u)=m_{uv_i}-m_{uv_0}=0$. If $u=v_0$, then $M_H(y_i)(v_0)=m_{v_0v_i}-m_{v_0v_0}=-\left(d_{M_H}(W_E)-r_{M_H}(W_E)\right)$. Similarly for $u=v_i$,  $M_H(y_i)(v_i)=m_{v_iv_i}-m_{v_iv_0}=\left(d_{M_H}(W_E)-r_{M_H}(W_E)\right).$
   Therefore, $M_Hy_i=\left(d_{M_H}(W_E)-r_{M_H}(W_E)\right)y_i,$ for all $i=1,2,\ldots,n$. Since $\{y_i:i=1,2,\ldots,n\}$ are linearly independent, the multiplicity of the eigenvalue is at least $|W_{E}|-1$. 
		\end{proof}
{ The above result can also be derived from \cite[Theorem 2.4]{unit}. }
  \begin{rem}
      There may exists two distinct units $W_E,W_F\in\mathfrak{U}(H)$ such that  $d_{M_H}(W_E)-r_{M_H}(W_E)=d_{M_H}(W_F)-r_{M_H}(W_F)=\lambda\text{(~say)}$. We will show such instances shortly in the \Cref{ex-unit-comp}. In that case, as the proof of the \Cref{lem-unit} suggests (see the eigenvectors of the form $y_i=\chi_{\{v_i\}}-\chi_{\{v_0\}}$ for $v_i, v_0 \in W_E$ in the proof), if $y_{W_E} $ is an eigenvector of $\lambda$, then  $y_{W_E}(v)=0$ for all $v\in V(H)\setminus W_E$. Similarly, if $y_{W_F} $ is an eigenvector of $\lambda$, then $y_{W_F}(v)=0$ for all $v\in V(H)\setminus W_F$. Thus, the (usual) inner product $\langle y_{W_E},y_{W_F}\rangle=0$, and $y_{W_E},y_{W_F}$ are linearly independent to each other. Therefore, the multiplicity of $\lambda$ is at least $|W_E|+|W_F|-2$.
  \end{rem}
 We refer to each eigenvalue obtained using \Cref{lem-unit} as \emph{unit eigenvalue}. In the following examples, we show some unit-compatible matrices and provide unit eigenvalues of these matrices using the \Cref{lem-unit}. If $\lambda_{W_E}(M_H)$ is a unit eigenvalue of a unit-compatible matrix $M_H$ corresponding to the unit $ W_{E}=\{v_0,v_1,\ldots,v_n\}$, then the eigenspace of $\lambda_{W_E}$ has a subspace generated by the vectors $\{\chi_{\{v_i\}}-\chi_{\{v_0\}}:i=1,2,\ldots,n\}$. Now onward, we denote this subspace as $S_{W_E}$. It is intriguing to note that this subspace depends only on the units but not on the unit-compatible matrix. That is, for any unit-compatible matrix $M_H$, a unit-eigenvalue $\lambda_{W_E}(M_H)=d_{M_H}(W_E)-r_{M_H}(W_E)$ depends on both the matrix $M_H$, and the unit $M_H$, but the subspace $S_W$ of the eigenspace of $\lambda_{W_E}(M_H)$ depends only on $W_E$ but not on the matrix $M_H$. We have  $y_i=\chi_{\{v_i\}}-\chi_{\{v_0\}}$ is such that $y_i(v)=0$ for all $v\in V(H)\setminus W_E$, and  $\sum\limits_{v\in W_E}y_i(v)=0$ for all $i=1,2,\ldots,n$. Thus the subspace
 $$S_{W_E}=\{x:V(H)\to\mathbb{C}:\sum\limits_{v\in W_E}x(v)=0, \text{~and~} x(v)=0 \text{~for all~}v\in V(H)\setminus W_E\}$$ is spanned by $\{y_i:i=1,\ldots,n\}$, and its dimension is $|W_E|-1$.
 
  \begin{exm}\label{ex-unit-comp}\rm 
In literature, there are several unit-compatible matrices, and as the \Cref{lem-unit} suggests,  the existence of units in the hypergraph is manifested in the spectrum of these matrices.
Recall the adjacency matrix $A^{(r)}_H$ described in the \Cref{ex-hyp-mat}. This matrix is a unit-compatible matrix with $d_{A^{(r)}_H}(W_E)=0$, $s_{W_E}^w(A^{(r)}_H)=|E\cap E_w(H)|$, and $r_{A^{(r)}_H}(W_E)=|E|$ for any $W_E\in \mathfrak{U}(H)$. Thus, for all $W_E\in \mathfrak{U}(H)$, we have an eigenvalue $-|E|$ with multiplicity $|W_E|-1$ of $A^{(r)}_H$. Let $H$ be the hypergraph considered in the \Cref{ex-hyp-rot} (illustrated in the \Cref{fig:rot}). Three units of $H$ are $W_{E_1}=\{2,3\}$, $W_{E_2}=\{5,6\}$, and $W_{E_3}=\{8,9\}$, where the generating sets are $E_1=\{f,i,j\}$, $E_2=\{g,j,h\}$, and $E_3=\{e,h,i\} $. Since $|E_1|=|E_2|=|E_3|=3$, $-3$ is an eigenvalue of multiplicity at least $1$ for each unit $W_{E_i}$, for $i=1,2,3$. The corresponding eigenvectors are $\chi_{\{3\}}-\chi_{\{2\}}$, $\chi_{\{5\}}-\chi_{\{6\}}$, and $\chi_{\{8\}}-\chi_{\{9\}}$.
    
      If $H$ is the hypergraph detailed in the \Cref{hyp-fig} (see the \Cref{fig:unit}), then corresponding to the units $W_{E_1}=\{1,2\}$, $W_{E_2}=\{3,4\}$, $W_{E_3}=\{5,6,15\}$, $W_{E_4}=\{7,8\}$, $W_{E_5}=\{9,10\}$, $W_{E_6}=\{11,12,16\}$, $W_{E_7}=\{13,14\}$, and $W_{E_8}=\{17,18\}$ the unit-eigenvalue of $A^{(r)}_H$ are $-5$, $-3$, $ -1$, $ -1$, $ -1$, $-2$, $ -2$,  and $ -2$ respectively, with multiplicity at least $1$, $1$, $ 2$, $ 1$, $1$, $2$, $1$, and $1$ respectively. Similar fact can be proved for matrices $A^{(b)}_H$, $L_H^{(b)}$, $Q$ described in the \Cref{ex-hyp-mat}.
  \end{exm}
  Let $H$ be a hypergraph and $W_E\in \mathfrak{U}
(H)$. For any hyperedge $e\in E(H)$,
 either $W_E\cap e=\emptyset$ or $W_E\subseteq e$. Consequently, every hyperedge $e\in E(H)$ is either a unit itself or can be represented as a disjoint union of units. By identifying each unit as a single vertex, every hyperedge is contracted, leading to the following notion.
 
		\begin{df}[Unit-contraction]
			Let $H$ be a hypergraph. The unit-contraction of $H$, denoted by $\hat H$, is a hypergraph such that $V(\hat{H})=\mathfrak{U}(H)$, and $E(\hat H)=\{\{W_{E_1},W_{E_2},\ldots,W_{E_{n_e}}\}:e=\bigcup\limits_{i=1}^{n_e}W_{E_i}\in E(H)\}$. Here $n_e$ is the total number of units contained in $e$.
		\end{df}
		Thus, unit contraction leads us to the bijection $\pi_H:E(H)\to E(\hat{H})$ defined by $e=\bigcup\limits_{i=1}^{n_e}W_{E_i}\mapsto \hat{e}=\{W_{E_1},W_{E_2},\ldots,W_{E_{n_e}}\}$ for all $e\in E(H)$. We refer to $\pi_H$ as the hyperedge-unit-contraction.
		
	Let $H$ be a hypergraph, $W_E\in\mathfrak{U}(H)$, and $M_H=\left(m_{uv}\right)_{u,v\in V(H)}$ be a unit-compatible matrix. For  $u,v\in W_E$,
		\begin{align*}
			\sum\limits_{w\in W_E}m_{uw}&=d_{M_H}(W_E)+m_{uv}+\sum\limits_{w\in W_E\setminus\{u,v\}}m_{uw}\\
			&=d_{M_H}(W_E)+m_{vu}+\sum\limits_{w\in W_E\setminus\{u,v\}}m_{vw}= \sum\limits_{w\in W_E}m_{vw}.
		\end{align*}
		Thus, for all $u\in W_E$, the sum $ \sum\limits_{w\in W_E}m_{uw}=b_H(W_E)$ is a constant. For two distinct $W_E,W_F\in \mathfrak{U}(H)$, if $u,v\in W_E$ then
		$$ \sum\limits_{w\in W_F}m_{uw}= \sum\limits_{w\in W_F}m_{vw}.$$
		Thus, for all $u\in W_E$, the sum $ \sum\limits_{w\in W_F}m_{uw}=b_H(W_E,W_F)$, a real number that depends only on $W_E$, and $W_F$.	
  For any unit-compatible matrix $M_H$ associated with a hypergraph $H$,
		the quotient matrix $[M_H/\mathfrak{U}(H)]=\left(b_{ij}\right)_{W_{E_i},W_{E_j}\in\mathfrak{U}(H)}$ is a matrix of order $|\mathfrak{U}(H)|$, such that 
		\begin{align}\label{unit-q}
		    b_{ij}=
		\begin{cases}
			b_H(W_{E_i})&\text{~if~}i=j,\\
			b_H(W_{E_i},W_{E_j})&\text{~otherwise.}
		\end{cases}
		\end{align}

Using \Cref{lem-unit}, we can find $\sum\limits_{W_E\in\mathfrak{U}(H)}|W_E|-1=|V(H)|-|\mathfrak{U}(H)|$ out of $|V(H)|$ eigenvalues of a unit-compatible matrix $M_H$ associated with the hypergraph $H$.  We use 
		the quotient matrix $[M_H/\mathfrak{U}(H)]$ for finding the remaining $|\mathfrak{U}(H)|$ eigenvalues.
	
  For any hypergraph $H$, if $y:V(\hat{H})\to\mathbb{C}$, the blow up of $y$, is a funtion  $\overline{y}:V(H)\to\mathbb{C}$, defined by $\overline{y}(v)=y(W_{E_v(H)})$. 
		\begin{lem}\label{cont-equit-lem}
			Let $H$ be a hypergraph and $M_H$ be a unit-compatible matrix. Each eigenvalue of $[M_H/\mathfrak{U}(H)]$ is an eigenvalue of $M_H$. Moreover, if $[M_H/\mathfrak{U}(H)]y=\lambda y$, then $M_H \overline{y}=\lambda \overline{y}$.
		\end{lem}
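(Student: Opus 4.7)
The plan is to show that the blow-up operation $y \mapsto \overline{y}$ embeds the eigenspaces of the quotient matrix $[M_H/\mathfrak{U}(H)]$ into the eigenspaces of $M_H$ with the same eigenvalue. Since $\overline{y}$ is just the vector that is constant on each unit with value $y(W_E)$ on $W_E$, the proof reduces to a direct calculation of $(M_H\overline{y})(u)$ using the constancy property that defines $[M_H/\mathfrak{U}(H)]$.

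Concretely, I would start by fixing any $u \in V(H)$ and letting $W_E \in \mathfrak{U}(H)$ be the unique unit containing $u$. Then I would partition the summation index in $(M_H\overline{y})(u) = \sum_{v \in V(H)} m_{uv}\overline{y}(v)$ according to the units: $V(H) = \bigcup_{W_F \in \mathfrak{U}(H)} W_F$. Because $\overline{y}(v) = y(W_F)$ for $v \in W_F$, I can pull $y(W_F)$ outside the inner sum and obtain $\sum_{W_F \in \mathfrak{U}(H)} y(W_F) \sum_{v \in W_F} m_{uv}$. The crucial step is then to identify the inner sums with the entries $b_{ij}$ of the quotient matrix: for $W_F = W_E$ this inner sum equals $b_H(W_E)$, and for $W_F \neq W_E$ it equals $b_H(W_E, W_F)$, as established in the paragraph preceding the definition of $[M_H/\mathfrak{U}(H)]$ in \eqref{unit-q}. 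Hence the whole expression is exactly $([M_H/\mathfrak{U}(H)]\,y)(W_E)$, which equals $\lambda y(W_E) = \lambda \overline{y}(u)$. Since $u$ was arbitrary, $M_H\overline{y} = \lambda \overline{y}$.

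Finally, to conclude that $\lambda$ is indeed an eigenvalue of $M_H$, I would note that if $y$ is a non-zero eigenvector of $[M_H/\mathfrak{U}(H)]$, then $\overline{y}$ is non-zero as well: choose any $W_E$ with $y(W_E) \neq 0$ and any vertex $u \in W_E$ (a unit is non-empty by definition), and then $\overline{y}(u) = y(W_E) \neq 0$. This ensures that the blow-up gives a genuine eigenvector of $M_H$ with eigenvalue $\lambda$.

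The step most requiring care is the identification of $\sum_{v \in W_F} m_{uv}$ with a well-defined entry of the quotient matrix that depends only on the pair $(W_E, W_F)$ and not on the specific representative $u \in W_E$. However, this is precisely the content of the calculation carried out just before \eqref{unit-q}, which uses only the unit-compatibility conditions $m_{uw} = m_{vw}$ for $u,v \in W_E$ and $w \notin W_E$, combined with the self-interaction identity $d_{M_H}(W_E) + m_{uv} = d_{M_H}(W_E) + m_{vu}$ inside $W_E$. So this obstacle has already been cleared, and the lemma follows from a clean substitution argument.
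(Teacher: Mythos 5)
Your proposal is correct and follows essentially the same route as the paper's proof: both partition the sum $\sum_{v\in V(H)} m_{uv}\overline{y}(v)$ over the units, use the constancy of $\overline{y}$ on each unit to recover the quotient-matrix entries, and conclude $(M_H\overline{y})(u)=([M_H/\mathfrak{U}(H)]y)(W_E)=\lambda\overline{y}(u)$. Your additional remark that $\overline{y}\neq 0$ whenever $y\neq 0$ is a small detail the paper leaves implicit, but it is needed to conclude that $\lambda$ is genuinely an eigenvalue of $M_H$, so it is a welcome addition.
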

		\begin{proof}
			For any $v\in W_{E_i}\in \mathfrak{U}(H)$, 
			\begin{align*}
				(M_H\Bar{y})(v)&=\sum\limits_{w\in V(H)}m_{vw}\Bar{y}(w)=\sum\limits_{W_{E_j}\in \mathfrak{U}(H) }\sum\limits_{w\in W_{E_j} }m_{vw}\Bar{y}(w)\\
				&=\sum\limits_{W_{E_j}\in \mathfrak{U}(H) }b_{ij}y(W_{E_j})=([M_H/\mathfrak{U}(H)]y)(W_{E_i})
			\end{align*}
			Thus, if $[M_H/\mathfrak{U}(H)]y=\lambda y$ then $(M_H(\overline{y}))(v)=([M_H/\mathfrak{U}(H)]y)(W_{E_i})=\lambda y(W_{E_i})=\lambda (\overline{y})(v)$. Therefore, $M_H\overline{y}=\lambda \overline{y}$.
		\end{proof}

  	\subsection{Unit-automorphisms in a hypergraph}
  Given any hypergraph $H$, any symmetry in the unit contraction $\hat{H}$ induces a symmetry in $H$. If $f\in Aut(\hat{H})$, then we refer to the symmetry in $H$ associated with $f$ as unit-automorphism in $H$.
		\begin{df}[Unit-automorphism]
			A \emph{unit-automorphism} is a  bijection $$f:\mathfrak{U}(H)\to\mathfrak{U}(H)$$ such that $$e=\bigcup\limits_{i=1}^{n_e}W_{E_i}\in E(H)$$ if and only if $$e^{\prime} =\bigcup\limits_{i=1}^{n_e}f(W_{E_i})\in E(H).$$
			
		\end{df}
		
		That is, a unit-automorphism is a bijection $f$ on $\mathfrak{U}(H)$ that induces a bijection $\hat{f}$ on $E(H)$ by
		$$e=\bigcup\limits_{i=1}^{n_e}W_{E_i}(\in E(H))\mapsto e^{\prime} =\bigcup\limits_{i=1}^{n_e}f(W_{E_i})(\in E(H)).$$

		\begin{prop}\label{hyp-aut}
			Every hypergraph automorphism induces a unit-automorphism.
		\end{prop}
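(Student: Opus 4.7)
The key observation is that for any $v \in V(H)$ and any automorphism $f$, the star of the image satisfies
\[ E_{f(v)}(H) = \{\hat{f}(e) : e \in E_v(H)\}, \]
because $\hat{f}$ is a bijection on $E(H)$ and a hyperedge $e'$ contains $f(v)$ precisely when $\hat{f}^{-1}(e')$ contains $v$. The plan is to use this to lift $f$ to a bijection on $\mathfrak{U}(H)$ that respects the hyperedge structure, and then to verify the two directions of the unit-automorphism condition.

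First, I would note that if $u, v$ lie in a common unit $W_E$, so $E_u(H) = E_v(H) = E$, then the observation above gives $E_{f(u)}(H) = E_{f(v)}(H) = \{\hat{f}(e) : e \in E\}$; hence $f(u)$ and $f(v)$ share a common star and therefore belong to a single unit. Writing $E' = \{\hat{f}(e) : e \in E\}$, this defines a map $\tilde{f} : \mathfrak{U}(H) \to \mathfrak{U}(H)$ by $\tilde{f}(W_E) = W_{E'}$, and we immediately obtain the inclusion $f(W_E) \subseteq \tilde{f}(W_E)$.

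Next, to see that $\tilde{f}$ is a bijection, I would apply the same construction to $f^{-1} \in Aut(H)$ to obtain $\widetilde{f^{-1}} : \mathfrak{U}(H) \to \mathfrak{U}(H)$. Since $\widehat{f^{-1}} = (\hat{f})^{-1}$, a direct unwinding shows that $\widetilde{f^{-1}} \circ \tilde{f}$ and $\tilde{f} \circ \widetilde{f^{-1}}$ both equal the identity on $\mathfrak{U}(H)$. This sharpens the earlier inclusion: from $f^{-1}(\tilde{f}(W_E)) \subseteq \widetilde{f^{-1}}(\tilde{f}(W_E)) = W_E$ one gets $\tilde{f}(W_E) \subseteq f(W_E)$, and hence $f(W_E) = \tilde{f}(W_E)$. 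For the hyperedge-preservation condition, take any $e = \bigcup_{i=1}^{n_e} W_{E_i} \in E(H)$; then, because $f$ is a hypergraph automorphism,
\[ E(H) \ni \hat{f}(e) = \bigcup_{i=1}^{n_e} f(W_{E_i}) = \bigcup_{i=1}^{n_e} \tilde{f}(W_{E_i}), \]
which is the forward direction of the defining condition of a unit-automorphism. The reverse direction follows by applying the same computation to $\tilde{f}^{-1} = \widetilde{f^{-1}}$, induced by $f^{-1} \in Aut(H)$.

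There is no deep obstacle here; the only point requiring care is the refinement $f(W_E) = \tilde{f}(W_E)$, i.e., verifying that $f$ restricts to a bijection between units rather than merely an injection of one unit into another. This is exactly what the symmetric argument using $f^{-1}$ provides, and once it is in hand the hyperedge condition is immediate from the automorphism property of $f$.
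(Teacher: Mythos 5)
Your proposal is correct and follows essentially the same route as the paper: define the induced map on units by sending $W_E$ to the unit whose generating set is the image of $E$ under $\hat f$ (equivalently, the unit containing $f(v)$ for $v\in W_E$), check well-definedness from $E_{f(u)}(H)=E_{f(v)}(H)$, and derive the hyperedge condition from $\hat f(e)=\bigcup_i f(W_{E_i})$. You are somewhat more careful than the paper in verifying bijectivity and the equality $f(W_E)=\tilde f(W_E)$ via the symmetric argument with $f^{-1}$, which the paper leaves implicit, but the underlying argument is the same.
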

		\begin{proof}
			Let $H$ be a hypergraph and $g:V(H)\to V(H)$ is a hypergraph automorphism.
			
			Since $g$ is a bijection, for any $W_{E_0}\in\mathfrak{U}(H)$, for any $u,v\in W_{E_0}$,
			$$E_{g(u)}(H)=E_{g(v)}(H).$$
			Therefore, using $g$, we can define the bijection 
			
			$$f:\mathfrak{U}(H)\to\mathfrak{U}(H)$$
			
			as $$W_{E_0}\mapsto W_{E_{g(v)}(H)},$$
			where $v\in W_{E_0}$.
			
			Since $g$ is a hypergraph automorphism, $$e=\bigcup\limits_{i=1}^{n_e}W_{E_i}\in E(H)$$ if and only if $$e^{\prime} =\bigcup\limits_{i=1}^{n_e}f(W_{E_i})=\{g(v):v\in e\}\in E(H).$$
			
		\end{proof}
		The converse of the \Cref{hyp-aut} is not true; for justification, consider the following example.
		\begin{exm}\label{uit-auto-non-auto}\rm
			Consider the hypergraph $H$ given in \Cref{hyp-fig} (see \Cref{fig:unit}). If we define $f:\mathfrak{U}(H)\to\mathfrak{U}(H)$ as 
			$$f=\begin{pmatrix}
				W_{E_1}&W_{E_2}&W_{E_3}&W_{E_4}&W_{E_5}&W_{E_6}&W_{E_7}&W_{E_8}
				\\W_{E_1}&W_{E_2}&W_{E_4}&W_{E_5}&W_{E_3}&W_{E_7}&W_{E_6}&W_{E_8}
			\end{pmatrix}.$$
			It can be easily verified that $f$ induced a bijection $\hat{f}:E(H)\to E(H)$, given by
			$$ \hat{f}=
			\begin{pmatrix}
				e_1&e_2&e_3&e_4&e_5&e_6&e_7\\
				e_3&e_1&e_2&e_5&e_4&e_7&e_6
			\end{pmatrix}.$$
			Thus, $f$ is a unit-automorphism. Since $f$ can only induce a bijection on $E(H)$ but not on $V(H)$, the unit-automorphism $f$ can not induce a hypergraph automorphism.
		\end{exm}
		
		If a unit-automorphism $f$ of a hypergraph $H$ is cardinality preserving that is  $|f(W_E)|=|W_E|$ for all $W_E\in \mathfrak{U}(H)$. Thus, we have a bijection $f_{W_E}:W_E\to F(W_E) $ for each unit. Since $V(H)=\bigcup\limits_{W_E\in\mathfrak{U}(H)}W_E$ the collection of bijections  $\{f_{W_E}:W_E\in\mathfrak{U}(H)\}$ leads us to the hypergraph automorphism $\bar{f}:V(H)\to V(H) $ defined by $f(v)=f_{W_E}$ if $v\in W_E$ for any $W_E\in\mathfrak{U}(H)$. Thus, if a unit-automorphism is cardinality preserving, then it induces a hypergraph automorphism, but if a unit-automorphism does not preserve the cardinality of units, then it may not induce any automorphism.	For instance, in \Cref{uit-auto-non-auto}, the cardinality of $W_{E_3}$ and $W_{E_5}$ are not the same, and the unit automorphism $f$ does not induce any automorphism.
  
		Unit automorphisms are such symmetries of hypergraph that are preserved under unit contraction. Thus, we have the following result.
		
	\begin{lem}
			A unit-automorphism in $H$ corresponds to a hypergraph automorphism in $\hat H$. Any unit rotation of order $p$ on a hypergraph $H$ is a rotation in $\hat H$ of order $p$.
		\end{lem}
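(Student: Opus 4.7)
The plan is to unfold the definitions of unit-automorphism, unit-contraction $\hat{H}$, and rotation to see that both claims become essentially tautological. First I would observe that a unit-automorphism $f:\mathfrak{U}(H)\to\mathfrak{U}(H)$ is, by its very domain and codomain, already a bijection on $V(\hat{H})=\mathfrak{U}(H)$, so the only thing to verify for the first sentence is that $f$ preserves $E(\hat{H})$ in the sense required of an automorphism of $\hat{H}$.

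To verify hyperedge-preservation, I would take an arbitrary $\hat{e}\in E(\hat{H})$. By the definition of $E(\hat{H})$ there is $e=\bigcup_{i=1}^{n_e}W_{E_i}\in E(H)$ with $\pi_H(e)=\hat{e}=\{W_{E_1},\ldots,W_{E_{n_e}}\}$. The image of $\hat{e}$ under $f$ is then $\{f(W_{E_1}),\ldots,f(W_{E_{n_e}})\}$. The defining property of a unit-automorphism asserts $\bigcup_{i=1}^{n_e}f(W_{E_i})\in E(H)$, which by the definition of $E(\hat{H})$ means exactly $\{f(W_{E_1}),\ldots,f(W_{E_{n_e}})\}\in E(\hat{H})$. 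The reverse implication comes from the ``only if'' half of the unit-automorphism definition applied in the same way. Thus $f\in Aut(\hat{H})$.

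For the second sentence, a unit-rotation of order $p$ comes with a disjoint partition $\mathfrak{U}(H)=\mathcal{U}_0\cup\mathcal{U}_1\cup\ldots\cup\mathcal{U}_{p-1}\cup\mathcal{X}$ such that $f$ fixes every unit in $\mathcal{X}$ and $f(\mathcal{U}_i)=\mathcal{U}_{(i+1)\bmod p}$. Because $V(\hat{H})=\mathfrak{U}(H)$, this partition is simultaneously a partition of $V(\hat{H})$; combined with the first sentence, it is precisely the data required for $f$ to be a rotation of order $p$ in $\hat{H}$, with $i$-th component $\mathcal{U}_i$ and invariant set $\mathcal{X}$.

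Honestly, there is no substantive obstacle here: the statement is essentially a restatement of definitions once one notices that the unit-automorphism condition on $E(H)$ transports verbatim to the automorphism condition on $E(\hat{H})$ via the bijection $\pi_H$. The only bookkeeping point to watch is that the induced action of $f$ on $E(\hat{H})$ obtained by viewing $f$ as a bijection on $V(\hat{H})$ coincides with the action on $E(H)$ induced by the unit-automorphism condition, which is immediate because both send $\{W_{E_1},\ldots,W_{E_{n_e}}\}$ to $\{f(W_{E_1}),\ldots,f(W_{E_{n_e}})\}$.
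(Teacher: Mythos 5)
Your proposal is correct and takes essentially the same route as the paper: both arguments amount to unfolding the definitions and transporting the induced hyperedge bijection through the identification $V(\hat H)=\mathfrak{U}(H)$ (the paper phrases this via the conjugation $\pi_H\circ\hat f\circ\pi_H^{-1}$, while you verify the if-and-only-if edge condition directly, which is the same content). Your explicit treatment of the rotation part is in fact more detailed than the paper's one-line ``similarly''.
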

		\begin{proof}
			Let $f:\mathfrak{U}(H)\to\mathfrak{U}(H)$ be a unit-automorphism. Thus, it induces a bijection $\hat{f}:E(H)\to E(H)$. The bijection $\hat{f}$ and the hyperedge-unit- contraction 
			$\pi_H $ lead us to another bijection $ \pi_H\circ\hat{f}\circ\pi_H^{-1}:E(\hat{H})\to E(\hat{H})$.
			\begin{center}

				\begin{tikzpicture}[scale=0.5]
					
					\node [style=none] (0) at (0, 4) {$E(H)$};
					\node [style=none] (1) at (0, 0) {$E(\hat{H})$};
					\node [style=none] (2) at (0, 3.5) {};
					\node [style=none] (3) at (0, 0.5) {};
					\node [style=none] (4) at (7.5, 4) {$E(H)$};
					\node [style=none] (5) at (7.5, 0) {$E(\hat H)$};
					\node [style=none] (6) at (7, 3.5) {};
					\node [style=none] (7) at (7, 0.5) {};
					\node [style=none] (8) at (3.75, 4.75) {$\Hat f$};
					\node [style=none] (9) at (-0.75, 2) {$\pi_H$};
					\node [style=none] (10) at (8, 2.25) {$\pi_H$};
					\node [style=none] (11) at (4, -0.75) {$\pi_H\circ\hat{f}\circ\pi_H^{-1}$};
					\node [style=none] (12) at (0.75, 4) {};
					\node [style=none] (13) at (6.25, 4) {};
					\node [style=none] (14) at (0.75, 0) {};
					\node [style=none] (15) at (6.25, 0) {};
					\draw [->] (2.center) to (3.center);
					\draw [->] (6.center) to (7.center);
					\draw [->] (12.center) to (13.center);
					\draw [->] (14.center) to (15.center);
				\end{tikzpicture}
			\end{center}
			Since $V(\hat{H})=\mathfrak{U}(H)$, the bijection $f:\mathfrak{U}(H)\to\mathfrak{U}(H)$ on $V(\hat{H})$ induces a bijection $\pi_H\circ\hat{f}\circ\pi_H^{-1}$ on $E(\hat{H})$. Therefore, $f$ is a hypergraph automorphism of the hypergraph $\hat{H}$. Similarly, we can prove any unit rotation of order $p$ on a hypergraph $H$ is a rotation in $\hat H$ of order $p$.
		\end{proof}
		Now, we can extend the notion of automorphism-compatible matrices to \emph{unit-automorphism-compatible} matrices.
		\begin{df}[Unit-automorphism-compatible matrices ]
			Let $H$ be a hypergraph. Suppose that $f:V(\hat H)\to V(\hat H)$ is unit automorphism in $V(H)$. A unit-compatible matrix $M_H$, indexed by $V(H)$, is called $f$-compatible if the matrix $[M_H/\mathfrak{U}(H)]$ is $f$-compatible with respect to the automorphism $f:V(\hat H)\to V(\hat H)$ associated with $\hat H$. 
		\end{df}

Given a unit-automorphism \( f \), a unit-compatible matrix might not necessarily be \( f \)-compatible. A clear example of this is observed in the unit-automorphism \( f \) detailed in \Cref{uit-auto-non-auto}, where the matrix \( A_H^{(r)} \) fails to be \( f \)-compatible. This is evident because when \( f(W_{E_3}) = W_{E_4} \) and \( f(W_{E_1}) = W_{E_1} \), the entry corresponding to the edge \( (W_{E_1},W_{E_3}) \) in \( [A_H^{(r)}/\mathfrak{U}(H)] \) is \( 3 \), while the entry corresponding to \( (W_{E_1},W_{E_4}) \) is \( 2 \). Consequently, the matrix \( [A_H^{(r)}/\mathfrak{U}(H)] \) is not compatible with the automorphism \( f:V(\hat{H})\to V(\hat{H}) \) of the hypergraph \( \hat H \). Thus, \( A_H^{(r)} \) is not compatible with the unit automorphism \( f:\mathfrak{U}(H)\to \mathfrak{U}(H) \). The next result provides some unit automorphisms such that unit-compatible matrices are compatible with these unit automorphisms.
	A  \emph{unit saturated} set $U\subseteq V(H)$ is such a set that for any $W_E\in \mathfrak{U}(H)$, if $U\cap W_{E}\ne \emptyset$, then $W_E\subseteq U$. 
 For instance, any $e\in E(H)$ can be written as union of units: $e=\bigcup\limits_{v\in e}W_{E_v(H)}$.
\begin{prop}\label{prop-unit-comp}
    Let $H$ be a hypergraph. If a unit-automorphism $f:\mathfrak{U}(H)\to \mathfrak{U}(H)$ of $H$ is cardinality preserving, that is $|f(W_E)|=|W_E|$ for all $W_E\in \mathfrak{U}(H)$, then $f$ induces a hypergraph automorphism $\bar f\in Aut(H)$. If a unit-compatible matrix $M_H$ is also compatible with the hypergraph automorphism $\bar f$, then $M_H$ is also compatible with the unit-automorphism $f$.
\end{prop}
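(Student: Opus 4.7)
The plan is to treat the two claims of \Cref{prop-unit-comp} in turn. First I would verify that any cardinality-preserving unit-automorphism $f:\mathfrak{U}(H)\to\mathfrak{U}(H)$ induces a hypergraph automorphism $\bar f\in Aut(H)$, fleshing out the remark made in the paragraph preceding the proposition statement. For each $W_E\in\mathfrak{U}(H)$, the hypothesis $|f(W_E)|=|W_E|$ permits the choice of some bijection $f_{W_E}:W_E\to f(W_E)$. Since $\mathfrak{U}(H)$ partitions $V(H)$, gluing the family $\{f_{W_E}\}_{W_E\in\mathfrak{U}(H)}$ yields a single bijection $\bar f:V(H)\to V(H)$. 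To check hyperedge preservation, I would use that every $e\in E(H)$ decomposes uniquely as a disjoint union of units $e=\bigcup_{i=1}^{n_e}W_{E_i}$, so $\bar f(e)=\bigcup_{i=1}^{n_e}f_{W_{E_i}}(W_{E_i})=\bigcup_{i=1}^{n_e}f(W_{E_i})$, which lies in $E(H)$ because $f$ is a unit-automorphism. The reverse direction follows by applying the same reasoning to $f^{-1}$ and $\bar f^{-1}$.

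Next, assume $M_H=(m_{uv})$ is unit-compatible and $\bar f$-compatible. To conclude that $M_H$ is $f$-compatible as a unit-automorphism, the definition requires that the quotient matrix $[M_H/\mathfrak{U}(H)]=(b_{W_{E_i},W_{E_j}})$ be compatible with the automorphism of $\hat H$ corresponding to $f$; concretely, I must verify
\begin{align*}
b_{W_{E_i},W_{E_j}}=b_{f(W_{E_i}),f(W_{E_j})}\qquad\text{for all }W_{E_i},W_{E_j}\in\mathfrak{U}(H).
\end{align*}
Fixing any $u\in W_{E_i}$, and exploiting that $\bar f$ restricts to a bijection from $W_{E_j}$ onto $f(W_{E_j})$, I would perform the change of variables $w=\bar f(w')$ and invoke $\bar f$-compatibility of $M_H$:
\begin{align*}
b_{f(W_{E_i}),f(W_{E_j})}&=\sum_{w\in f(W_{E_j})}m_{\bar f(u)w}=\sum_{w'\in W_{E_j}}m_{\bar f(u)\bar f(w')}=\sum_{w'\in W_{E_j}}m_{uw'}=b_{W_{E_i},W_{E_j}}.
\end{align*}
The diagonal case $i=j$ is handled by the same computation with $W_{E_j}$ replaced by $W_{E_i}$, since the definition \eqref{unit-q} of $b_H(W_{E_i})$ also involves a sum over $W_{E_i}$.

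I do not anticipate any real obstacle; the argument reduces to a change of variables in the row-sum defining the quotient entries. The one subtlety worth flagging is that the bijections $f_{W_E}$ used to assemble $\bar f$ are not canonical, so many different hypergraph automorphisms may induce the same unit-automorphism $f$. However, the second claim presupposes a specific $\bar f$ with which $M_H$ is already compatible, and the calculation above uses only this fixed $\bar f$, so no well-definedness question arises.
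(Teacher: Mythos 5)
Your proposal is correct and follows essentially the same route as the paper's proof: gluing arbitrary bijections $f_{W_E}:W_E\to f(W_E)$ into $\bar f$, checking hyperedge preservation via the unit decomposition of each hyperedge, and then verifying compatibility of the quotient matrix by the same change-of-variables computation in the row sums defining $b_H(W_E,W_F)$. Your remark about the non-canonicity of $\bar f$ is a sensible clarification but does not alter the argument.
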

\begin{proof}
    Since $|f(W_E)|=|W_E|$ for all $W_E\in \mathfrak{U}(H)$, there exist bijection $f_{W_E}:W_E\to f(W_E)$. Since the vertex can be partitioned into disjoint units as $V(H)=\bigcup\limits_{W_E\in\mathfrak{U}(H)}W_E$, for all $v\in V(H)$, we can define $\Bar{f}:V(H)\to V(H)$ as $\Bar{f}(v)=f_{W_E}(v)$ if $v\in W_E$ for some $W_E\in\mathfrak{U}(H)$. Since each $f_{W_E}$ is a bijection, the map $\bar{f}$ is a bijection. For all $e\in E(H)$, the hyperedge $e$ can be expressed as disjoint union of units as $e=W_{E_1}\cup W_{E_2}\cup\ldots\cup W_{E_{n_e}}$, for some natural number $n_e$. Since $f$ is a unit-automorphism of $H$, we have $\hat{\bar f}(e)=\{f(v):v\in e\}=f(W_{E_1})\cup f(W_{E_2})\cup\ldots\cup f(W_{E_{n_e}})=e'\in E(H)$  if and only if $e=W_{E_1}\cup W_{E_2}\cup\ldots\cup W_{E_{n_e}}\in E(H)$. This leads to the bijection of hyperedges $\hat{\bar f}:E(H)\to E(H) $ defined by $ \hat{\bar f}(e)=\{f(v):v\in e\}$. Therefore $\hat{\bar f}\in Aut(H)$. 

    Suppose that the unit-compatible matrix $M_H=\left(m_{uv}\right)_{u,v\in V(H)}$ is compatible with $\bar{f}\in Aut(H)$, and the quotient matrix is $[M_H/\mathfrak{U}(H)]=\left(b_{ij}\right)_{W_{E_i},W_{E_j}\in\mathfrak{U}(H)}$, as defined in \Cref{unit-q}. For $W_E,W_F\in\mathfrak{U}(H)$, and for some $ u\in W_E$, and $\bar f(u)\in f(W_E)$,
    \begin{align*}
        b_H(W_E,W_F)=\sum\limits_{v\in W_F}m_{uv}=\sum\limits_{\bar f(v)\in f(W_F)}m_{\bar f(u)\bar f(v)}=b_H(f(W_E),f(W_F)).
    \end{align*}
    Similarly, $b_H(W_E)=b_H(f(W_E))$ for all $W_E\in \mathfrak{U}(H)$. Thus, $M_H$ is compatible with the unit-automorphism $f$.
\end{proof}
In the \Cref{ex-hyp-mat}, we have described some matrices that are compatible with any hypergraph automorphism. The \Cref{ex-unit-comp} shows that some of them are unit-compatible. Consequently, by the \Cref{prop-unit-comp}, these matrices are compatible with any cardinality preserving unit-automorphism $f$.

In the next example, we show a matrix $M_H$ such that $M_H$ is compatible with unit-automorphisms even if it is not cardinality preserving.
\begin{exm}\rm
    Let $H$ be a hypergraph and $M_H=\left(m_{uv}\right)_{u,v\in V(H)}$ be a matrix such that for two distinct $u,v\in V(H)$,
    $m_{uv}=\frac{|E_u(H)\cap E_v(H)|}{n_v}$, and for $u=v$, the diagonal entry is $m_{uu}=\frac{|E_u(H)|}{n_u}$, where $n_u=|\{v:E_v(H)=E_u(H)\}|$ for any vertex $u\in V(H)$. Consider the quotient matrix $[M_H/\mathfrak{U}(H)]=\left(b_{W_{E_i}W_{E_j}}\right)_{W_{E_i},W_{E_j}\in\mathfrak{U}(H)}$ defined in \Cref{unit-q}. Therefore, $b_{W_{E_i}W_{E_j}}=|E_i\cap E_j|$ for $i\ne j$, and the diagonal entries are $b_{W_{E_i}W_{E_i}}=|E_i|$ for all $i$. Being an unit automorphism of $H$, $f\in Aut(\hat{H})$, for all $W_E\in\mathfrak{U}(H)$, if $W_{E'}=f(W_E)$, and $W_{F'}=f(W_F)$ then $|E'|=|E|$, and $|E'\cap F'|=|E\cap F|$. Consequently, $b_{W_{E}W_{F}}=b_{f(W_{E})f(W_{F})}$ for all $W_{E},W_F\in \mathfrak{U}(H)$, and $[M_H/\mathfrak{U}(H)]=\left(b_{W_{E_i}W_{E_j}}\right)_{W_{E_i},W_{E_j}\in\mathfrak{U}(H)}$ is compatible with the automorphism $f$ of $\hat{H}$, and thus $M_H$ is compatible with the unit automorphism $f$ of $H$.
\end{exm}
Given any unit-automorphism $f$ of a hypergraph $H$, by the \Cref{aut-decomp-rot} there exists disjoint rotations $f_1,f_2,\ldots, f_k$ in the unit-contraction $\hat{H}$ such that $f=f_1\circ f_2\circ\ldots\circ f_k$. We refer to this decomposition as the unit-rotation decomposition of the unit automorphism $f$. In the next result, we provide the eigenvalues of unit-automorphism-compatible matrices in terms of smaller matrices associated with the unit-rotation decomposition of the unit automorphism. 
\begin{thm}
    Let $H$ be a hypergraph and $f$ is a unit-automorphism in $H$. Suppose that $f=f_1\circ f_2\circ\ldots \circ f_k$ is a unit-rotation decomposition of $f$. If $M_H=\left(m_{uv}\right)_{u,v\in V(H)}$ is $f$-compatible, then the eigenvalues of $M_H$ are the following.
    \begin{enumerate}
				\item All the unit eigenvalues of $M_H$, that is $\{d_{M_H}(W_E)-r_{M_H}(W_E):W_E\in\mathfrak{U}(H)\}$,
				\item  eigenvalues of the matrix $N_{\hat H}^{\omega_{ij}}(f_i,U_0^{(i)})$ for all $j=1,\ldots,l_i-1$, $i=1,\ldots,k$, and the quotient matrix $[N_{\hat H}/\mathcal{O}_f]$,
			\end{enumerate}
   where $N_{\hat H}=[M_H/\mathfrak{U}(H)]$ and $U_0^{(i)} $ is the $0$-th component of the rotation $f_i$ of $\hat H$.
\end{thm}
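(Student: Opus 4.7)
The plan is to combine the two structural ingredients already developed: Proposition~\ref{lem-unit}, which supplies the unit eigenvalues directly from $M_H$, and Theorem~\ref{aut-comp} applied to the quotient $N_{\hat H}=[M_H/\mathfrak{U}(H)]$ as an automorphism-compatible matrix on $\hat H$, lifted back to $M_H$ via the blow-up in Lemma~\ref{cont-equit-lem}. A dimension count together with an orthogonality argument will then confirm that these two families of eigenvectors form an eigenbasis of $\mathbb{C}^{V(H)}$.

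First, I would apply Proposition~\ref{lem-unit} unit by unit. For each $W_E\in\mathfrak{U}(H)$ the subspace $S_{W_E}$ of dimension $|W_E|-1$ lies inside the eigenspace of $d_{M_H}(W_E)-r_{M_H}(W_E)$, and the subspaces $S_{W_E}$ indexed by distinct units have pairwise disjoint supports. Summing over all units yields $\sum_{W_E\in\mathfrak{U}(H)}(|W_E|-1)=|V(H)|-|\mathfrak{U}(H)|$ linearly independent eigenvectors of $M_H$, realising exactly the unit eigenvalues in item (1).

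Next, by the definition of unit-automorphism compatibility, $N_{\hat H}$ is compatible with the hypergraph automorphism $f\in Aut(\hat H)$, and the given factorization $f=f_1\circ\ldots\circ f_k$ is, by the preceding lemma, a rotation decomposition in $\hat H$. Hence Theorem~\ref{aut-comp} applies to $N_{\hat H}$ and produces a full eigenbasis $\{y^{(\alpha)}\}_{\alpha=1}^{|\mathfrak{U}(H)|}$ of $\mathbb{C}^{\mathfrak{U}(H)}$ whose eigenvalues are precisely those in item (2). Lemma~\ref{cont-equit-lem} then lifts each such eigenvalue $\lambda$ with eigenvector $y$ to the eigenvalue $\lambda$ of $M_H$ with eigenvector $\overline{y}$, the blow-up being constant on each unit.

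Finally, I would verify that the collection of $|V(H)|$ vectors $\{\,\overline{y^{(\alpha)}}\,\}\cup\bigcup_{W_E}S_{W_E}$ is linearly independent; since $|V(H)|-|\mathfrak{U}(H)|+|\mathfrak{U}(H)|=|V(H)|$, this will close the proof. For any $z\in S_{W_E}$ we have $\sum_{v\in W_E}z(v)=0$ and $z$ is supported on $W_E$, so
\[
\langle\overline{y^{(\alpha)}},z\rangle \;=\; y^{(\alpha)}(W_E)\sum_{v\in W_E}z(v)\;=\;0,
\]
showing that the unit eigenvectors are orthogonal to every blow-up. Linear independence inside the family $\bigcup_{W_E}S_{W_E}$ is immediate from disjoint supports, and inside $\{\overline{y^{(\alpha)}}\}$ follows from linear independence of $\{y^{(\alpha)}\}$ on $\mathfrak{U}(H)$ together with the fact that the blow-up map is injective (every unit is non-empty). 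The only real bookkeeping obstacle is precisely to make certain that the two families do not overlap in the span, and the orthogonality calculation above is what settles this cleanly.
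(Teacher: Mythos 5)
Your proposal is correct and follows essentially the same route as the paper's own proof: unit eigenvalues via Proposition~\ref{lem-unit}, the remaining eigenvalues via the blow-up of eigenvectors of $N_{\hat H}=[M_H/\mathfrak{U}(H)]$ from Lemma~\ref{cont-equit-lem}, the spectral decomposition of $N_{\hat H}$ via Theorem~\ref{aut-comp}, and the same orthogonality-plus-dimension count to show the two families together form an eigenbasis. If anything, your explicit remarks on the injectivity of the blow-up map and the disjoint supports of the $S_{W_E}$ make the linear-independence bookkeeping slightly more complete than the paper's version.
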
\begin{proof}
    Since $M_H$ is compatible with the unit-automorphism $f$, the matrix $M_H$ is unit-compatible. Consequently, by the \Cref{lem-unit} each element of the collection $\{d_{M_H}(W_E)-r_{M_H}(W_E):W_E\in\mathfrak{U}(H)\}$ is an eigenvalue of $M_H$.

    The total number of unit eigenvalues of $M_H$ are $|V(H)|-|\mathfrak{U}(H)|$. By the \Cref{cont-equit-lem} the $|\mathfrak{U}(H)|$ number of eigenvalues of $M_H$ are the eigenvalues of $[M_H/\mathfrak{U}(H)] $. The proof of the \Cref{cont-equit-lem} shows that if $[M_H/\mathfrak{U}(H)]y=\mu y$ then $M_H\bar y=\mu \bar y$. The eigenvector $\bar y$ is constant on each $W_E\in \mathfrak{U}(H)$. If $x$ is the eigenvector of some unit-eigenvalue corresponding to a unit $W_E$, then $x\in S_{W_E}$, and thus $\sum\limits_{v\in W_E}x(v)=0$, and $x(v)=0$ for all $v\in V(H)\setminus W_E$. Therefore, $\langle x,y\rangle=0$, and $x,y$ are linearly independent. Therefore, the complete spectrum of $M_H$ consists of all the unit eigenvalues of $M_H$ and all the eigenvalues of $N_{\hat H}=[M_H/\mathfrak{U}(H)]$.
    By the \Cref{aut-comp}, since $N_{\hat H}$ is compatible with the automorphism $f:V(\hat H)\to V(\hat H)$, the eigenvalues of $N_H$ are the eigenvalues of the matrices $N_{\hat H}^{\omega_{ij}}(f_i,U_0^{(i)})$ for all $j=1,\ldots,l_i-1$, $i=1,\ldots,k$, and the quotient matrix $[N_{\hat H}/\mathcal{O}_f]$.
\end{proof}
\section{Discussion and Conclusion}
Symmetries in hypergraphs can lead us to invariant subspaces of hypergraph matrices. We refer the reader \cite{invariant_2024} for a detailed study on invariant subspaces of hypergraph matrices due to hypergraph symmetries.
 Let $H$ be a hypergraph, and $f$ is a rotation in $H$ of order $n$. Suppose that the $0$-th component of $f$ is $U_0=\{v_1,v_2,\ldots,v_m\}$, and $X=\{u_1,u_2,\ldots,u_p\}$ is the invariant set under $f$. In that case, the collection of $f$-orbits are $$\mathcal{O}_f=\{O_f(v_1),O_f(v_2),\ldots,O_f(v_m),O_f(u_1),O_f(u_2),\ldots,O_f(u_p)\}.$$ Given any $v\in V(H)$ the characteristic function  $\chi_{_{O_f(v)}}:V(H)\to\mathbb{R}$ of the orbit $O_f(v)$ of $v$ is such that $\chi_{_{O_f(v)}}(u)=1$ if $u\in O_f(v)$, and otherwise $\chi_{_{O_f(v)}}(u)=0$. Consider the $(m+p)$-dimensional subspace $\mathcal{V}_f=\{x:V(H)\to\mathbb{R}:x=\sum\limits_{i=1}^mc_i\chi_{_{O_f(v_i)}}+\sum\limits_{j=1}^{p}c_{i+m}\chi_{_{O_f(u_i)}}\}$. For all $x\in \mathcal{V}_f$, the function $x$ is constant on each $f$-orbit. Thus, we name $\mathcal{V}_f$ as \emph{$f$-orbit synchronized space}. Given any $f$-compatible matrix $M_H$, if $\{z^{(i)}:i=1,2,\ldots,m+p\}$ is the complete list of eigenvectors of the $[M_H/\mathcal{O}_f]$, then the collection $\mathcal{Z}=\{z^{(i)}_{O_f}:i=1,2,\ldots,m+p\}$ spans the subspace $\mathcal{V}_f$. Consequently, $\mathcal{V}_f$ is an invariant subspace of any $f$-compatible matrix $M_H$. 
 Thus, given any $f$-compatible matrix $M_H$, the $f$-orbit synchronized space, $\mathcal{V}_f$ is an invariant subspace of $M_H$.
 Suppose that  $f$ is a rotation of order $n$ in a hypergraph $H$, and $M_H$ is any $f$ compatible matrix. If $\{x_n: V(H)\to \mathbb{C}\}_{n\in\mathbb{N}}$ is a sequence of functions governed by the following iterative equation
   \begin{equation}\label{iteration}
       x_{n+1}=M_Hx_n.
   \end{equation}
   If for some $m\in\mathbb{N}$, there is a cluster synchronization in all the $f$-orbits, this cluster synchronization is retained in the subsequent members of the sequence. That is, for any orbit $O_f(u)\in \mathcal{O}_f$, if $x_m(v)=c_{u,m}$, a constant for all $v\in O_f(u)$, then for any orbit $O_f(u)\in \mathcal{O}_f$, $x_n(v)=c_{u,n}$ for all $v\in O_f(u)$. The iterative equations like \Cref{iteration} are common in dynamical networks and random walks. Therefore, the underlying hypergraph has a rotation $f$, then $f$ induces cluster-synchronization in $f$-orbits.

	



\end{document}